\newtheorem{thm}{Theorem}[section]
\newtheorem{prop}[thm]{Proposition}
\theoremstyle{definition}
\newtheorem{defn}{Definition}[section]
\theoremstyle{remark}
\newtheorem{remark}{Remark}[section]
\DeclareMathOperator{\erf}{erf}
\title[Variable-order fractional calculus] {Variable-order fractional calculus: \\a change of perspective}
\thanks{Accepted version of the paper:  {\color{blue}\href{https://doi.org/10.1016/j.cnsns.2021.105904}{R. Garrappa, A. Giusti, and F. Mainardi}, \emph{Variable-order fractional calculus: a change of perspective},  \href{https://doi.org/10.1016/j.cnsns.2021.105904}{Commun. Nonlinear Sci. Numer. Simul., 2021, 102, 105904}, \href{https://doi.org/10.1016/j.cnsns.2021.105904}{doi:10.1016/j.cnsns.2021.105904}} (released under a CC-BY-NC-ND license).}
\author{Roberto Garrappa}
\address{Roberto Garrappa: Department of Mathematics, University of Bari, Via E. Orabona 4, 70125 Bari, Italy.  Member of the INdAM Research group GNCS}
\email{roberto.garrappa@uniba.it}
\urladdr{www.dm.uniba.it/members/garrappa/main} 
\author{Andrea Giusti}
\address{Andrea Giusti: Physics and Astronomy Department, Bishop’s University, Sherbrooke, Canada - Institute of Theoretical Physics, ETH Zurich, Zurich, Switzerland.  Member of the INdAM Research group GNFM}
\email{agiusti@phys.ethz.ch}
\author{Francesco Mainardi}
\address{Francesco Mainardi: Physics and Astronomy Department, University of Bologna, Bologna, Italy.  Member of the INdAM Research group GNFM}
\email{Francesco.Mainardi@bo.infn.it}
\begin{document}

\begin{abstract}
Several approaches to the formulation of a fractional theory of calculus of ``variable order'' have appeared in the literature over the years. Unfortunately, most of these proposals lack a rigorous mathematical framework. We consider an alternative view on the problem, originally proposed by G. Scarpi in the early seventies, based on a naive modification of the representation in the Laplace domain of standard kernels functions involved in (constant-order) fractional calculus. We frame Scarpi's ideas within recent theory of General Fractional Derivatives and Integrals, that mostly rely on the Sonine condition, and investigate the main properties of the emerging variable-order operators. Then, taking advantage of powerful and easy-to-use numerical methods for the inversion of Laplace transforms of functions defined in the Laplace domain, we discuss some practical applications of the variable-order Scarpi integral and derivative.
\end{abstract}

\maketitle

\def\Rset{{\mathbb{R}}}
\def\Cset{{\mathbb{C}}}
\def\Zset{{\mathbb{Z}}}
\def\Nset{{\mathbb{N}}}
\def\eu{{\ensuremath{\mathrm{e}}}}
\def\iu{{\ensuremath{\mathrm{i}}}}
\def\du{{\ensuremath{\mathrm{d}}}}
\def\texttiny#1{{\text{\tiny{#1}}}}
\def\DC{{}^{\texttiny{C}}\! D}
\def\DR{{}^{\texttiny{RL}}\! D}
\def\DG{{}^{\texttiny{GL}}\! D}
\def\DL{{}^{\texttiny{L}}\! D}
\def\DS{{}^{\texttiny{S}}\! D}
\def\IS{{}^{\texttiny{S}}\! I}
\def\IRL{{}^{\texttiny{RL}}\! I}
\newcommand{\be}{\begin{eqnarray}}
\newcommand{\ee}{\end{eqnarray}}

\section{Introduction}

Derivatives and integrals of fractional (i.e., non-integer) order are among the most fashionable tools for modeling phenomena featuring  persistent memory effects (i.e., non-localities in time). Since many physical systems are characterized by dynamics involving memory effects whose behaviour changes over time, even transitioning from a fractional order to another, the interest for fractional operators soon moved to their variable-order counterparts. Needless to say that the compelling practical implications of these variable-order objects come at the price of a more involved mathematical characterization.

A naturally looking variable-order generalization of standard fractional derivatives is obtained by replacing the constant order $\alpha$ with a function $\alpha \, : [0, T] \subset \, \Rset ^{+} \, \to \, (0,1)$ in the Riemann-Liouville (RL) integral, {\em i.e.},
\begin{equation}
I^{\alpha(t)}_0 f(t) = \frac{1}{\Gamma\bigl(\alpha(t)\bigr)} \int_0^t (t-\tau)^{\alpha(t)-1} \, f(\tau) \du \tau,
\end{equation}
possibly coupled to the RL-like variable-order derivative
\begin{equation}
D^{\alpha(t)}_0 f(t) = \frac{1}{\Gamma\bigl(1-\alpha(t)\bigr)} \frac{\du}{\du t} \int_0^t (t-\tau)^{-\alpha(t)} f(\tau) \du \tau ,
\end{equation}
where we restricted $0<\alpha(t)<1$ for the sake of simplicity. However, the mathematical characterization of fractional calculus based on these operators is rather problematic since $D^{\alpha(t)}_0$ {\em does not} necessarily act as the left-inverse for $I^{\alpha(t)}_0$ \cite{Samko1995a,SamkoRoss1993}. This property is, nonetheless, recovered in the constant-order limit of the theory.

Over the years, several proposals for fractional variable-order operators have appeared in the literature; for instance, we recall the works by Bohannan \cite{Bohannan2017}, Coimbra \cite{Coimbra2003}, Ingman and Suzdalnitsky  \cite{IngmanSuzdalnitsky2004}, Kobelev et al. \cite{KobelevKobelevKlimontovich2003b,KobelevKobelevKlimontovich2003a}, Lorenzo and Hartley  \cite{LorenzoHartley2002},    Pedro et al. \cite{PedroKobayashiPereiraCoimbra2008}, Sierociuk et al. \cite{SierociukMaleszaMacias2015}, Sun et al. \cite{SunChenChen}. For a comprehensive review of the literature we refer the interested reader to Ortigueira et al. \cite{OrtigueiraValerioMachado2019}, Samko \cite{Samko2013} and Sun et al. \cite{SunChangZhangChen2019}. However, despite showing seemingly useful for physical applications, these definitions face the conceptual mathematical problems discussed by Samko and Ross  \cite{Samko1995a,SamkoRoss1993}. 

It is worth noting that, despite the complications discussed above, variable-order  methods were employed by Zheng et al.  \cite{ZhengWangFu2020,ZhengWangFu2021} in an attempt of overcoming the inconsistencies of some families of regular-kernel operators (in this regard, see  aslo \cite{AngstmannJacobsByronHenryXu2020,DiethlemGarrappaGiustiStynes2020,Hanyga2020} for a detailed discussion). 

From a numerical perspective, methods for solving variable-order fractional differential equations (FDEs) have also been analysed to some extent;
as a general reference we mention here the works by Chen et al. \cite{ChenLiuBurrage2014}, Tavares et al. \cite{TavaresAlmeidaTorres2016}, Zhuang et al. \cite{ZhuangLiuAnhTurner2009} and some of the several papers by Karniadakis et al. \cite{ZayernouriKarniadakis2015,ZengZhangKarniadakis2015,ZhaoSunKarniadakis2015}.

Viscoelasticity is the perfect playground for variable-order operators, as certain known scenarios display peculiar transitions from an order to another as a function time (see, {\em e.g.}, \cite{Esmonde2020}, \cite{RamirezCoimbra2007} or \cite{MG-2,MG-3,MG-1}). Further, in recent years variable-order fractional calculus has found some applications also in control theory \cite{Bahaa2017, OstalczykDuchBrzezinskiSankowski2015} as well as in modelling aggregation of particles in living cells \cite{FedotovHanZubarevJohnstonAllan2021}. It is also worthwhile to be mentioned the pioneering work by  Checkin, Gorenflo and Sokolov  \cite{ChechkinGorenfloSokolov2005} in which a time-fractional diffusion equation with time-fractional derivative whose order varies  in space is derived starting from the continuous time random walk scheme; a problem for which the  asymptotic representation of the solution has been recently investigated in \cite{FedotovHan2019}. For a review of some of the latest applications of variable-order fractional operators in natural sciences we refer the interested reader to \cite{PatnaikHollkampSemperlotti2020}.

To the best of our knowledge,
the Italian engineer Giambattista Scarpi was however the first to propose  \cite{Scarpi1972a,Scarpi1972b,Scarpi1973}, in the early seventies, the use of time-fractional derivatives with a time-dependent order. 
Scarpi's work was inspired by an early 
model by Smit and de Vries \cite{SmitVries1970}
which was aimed at providing a theoretical framework for materials showing features intermediate between solids and liquids. Notably, the approach proposed by G. Scarpi was not based on a naive replacement, in the kernel of some fractional  derivative, of the constant order  $\alpha$  with a  variable-order function $\alpha(t)$. The procedure proposed by Scarpi, instead, acts in a more subtle way at the level of the Laplace transform (LT) domain (on a different basis, however, with respect to the operators proposed by Coimbra in \cite{ Coimbra2003}) and constitutes an interesting novelty with respect to more traditional approaches. 

Despite the boom that the active research on fractional calculus has been experiencing for the last decade, so far Scarpi's approach has been mostly overlooked (except for a very recent contribution by Cuesta and Kirane  \cite{CuestaKirane2020} of which we are aware thanks to a private communication).

If, on the one hand, Scarpi's works were the first to introduce this peculiar approach to variable-order theories, on the other hand, they are solely focused on physical properties and implications of the proposed methods. In other words, the mathematical foundations supporting these object were not analyzed in details. Additionally, the operators proposed by Scarpi require reliable numerical techniques for handling the inversion of the LT, which were not available at the time of publication of Scarpi's seminal works.

Recently, much effort has been devoted, particularly by Yuri Luchko \cite{Luchko2021_FCAA,Luchko2021_Mathematics,Luchko2021_Symmetry}, to a mathematically sound formulation of a theory of general fractional integrals and derivatives. Such a theory
is aimed at characterizing classes of operators that satisfy some generalizations of the fundamental theorem of calculus by using the Sonine equation \cite{Sonine1884} as guiding principle. This novel approach has the merit of relaxing some of the conditions of Kochubei's general fractional calculus \cite{Kochubei2011,Kochubei2019a,Kochubei2019b} (see also \cite{LuchkoYamamoto2020}), thus encompassing a larger class of non-local operators. The key feature of this classification consist in the fact that it relies upon the Laplace-domain representation of these general fractional operators, thus providing the perfect tool set for designing a robust mathematical framework for Scarpi's ideas.

On the numerical side, the several advancements in the field of the numerical inversion of the LT, among which we recall the contribution by Weidemann and Trefethen \cite{WeidemanTrefethen2007}, provide us with the machinery needed to implement Scarpi's ideas to their fullest.      

It has now come the time to bring Scarpi's variable-order fractional calculus into the spotlight, precisely characterizing its mathematical foundations and highlighting its potential as modelling tool by taking advantage of modern numerical methods. 

This work is organized as follows. In Section \ref{S:Scarpi}, after recalling some basics of fractional calculus, we introduce the notions of 
the Scarpi derivative and integral.
In Section \ref{S:NecessaryAssumptions} we frame Scarpi's theory within a more general theoretical scheme for fractional calculus, based on the Sonine equation, and we investigate possible assumptions on the variable-order functions $\alpha(t)$. In Section \ref{S:TransitionFunctions} we consider some instructive examples operators obtained for some variable-order functions $\alpha(t)$ and Section \ref{S:S_Relaxation} is devoted the solution of the relaxation equation with the Scarpi derivative. Some considerations about higher-order operators are provided in Section \ref{S:HigherOrderOperators} and, finally, in Section \ref{S:FinalConsiderations} we provide some concluding remarks. Note that the method used to invert numerically the LT, allowing the investigation of Scarpi's fractional operators, is discussed in Appendix \ref{S:NumericalInversionLT}.

\section{Scarpi's variable-order fractional calculus}\label{S:Scarpi}

In order to introduce, and further develop, Scarpi's ideas on variable-order derivatives we preliminary recall some background materials on fractional integrals and derivatives.

\subsection{Preliminaries}

In this work we consider functions which are absolutely continuous on some interval $[0,T]$, {\em i.e.} $f \in AC[0,T]$. This is a not particularly restrictive assumption and it means that $f$ is  differentiable almost everywhere in $[0,T]$, with $f'\in L^1[0,T]$, where $L^1[0,T]$ is the usual space of Lebesgue-integrable functions on $[0,T]$, and 
\begin{equation}\label{eq:AbsoluteContinous}f(t) = f(0) + \int_0^t f'(s)\du s , \quad t \in [0,T] .
\end{equation}

The standard Dzhrbashyan-Caputo notion of fractional derivative of order $0<\alpha<1$, commonly referred to simply as Caputo derivative, is defined in terms of the weakly-singular Volterra-type integro-differential operator
\begin{equation}\label{eq:DCaputo}
\DC^{\alpha}_0 f(t) =  \int_0^t \phi(t-\tau) f'(\tau) \du \tau, \quad
\phi(t) = \frac{t^{-\alpha}}{\Gamma(1-\alpha) } .
\end{equation}
The defining property of $\DC^{\alpha}_0$ is that it acts as the {\em left-inverse} of the RL integral
\begin{equation}\label{eq:RLIntegral}
    \IRL^{\alpha}_0 f(t) = \int_0^t  \psi(t-\tau) f(\tau) \du \tau , \quad \psi(t) = \frac{1}{\Gamma(\alpha)} t^{\alpha-1} ,
\end{equation}
see {\em e.g.} \cite{Diethelm2010,KilbasSrivastavaTrujillo2006}.\footnote{The function $\psi(t)$ is known as the Gel'fand-Shilov kernel \cite{GelfandShilov1964,GorenfloMainardi1997,Mainardi2010}.} 
In other words, one has that  
$$
\DC^{\alpha}_0 \, \IRL^{\alpha}_0 f(t) = f(t) \quad \mbox{and} \quad \IRL^{\alpha}_0 \, \DC^{\alpha}_0 f(t) = f(t) - f(0) \, ,$$
thus implementing a sort of fundamental theorem of fractional calculus \cite{Luchko2020_FCAA}. Basically, the Caputo derivative $\DC^{\alpha}_0$ was introduced to provide a regularization of the RL one
\begin{equation}\label{eq:DRL}
\DR^{\alpha}_0 f(t) =  \frac{\du}{\du t} \int_0^t \phi(t-\tau) f(\tau) \du \tau, \quad
\phi(t) = \frac{t^{-\alpha}}{\Gamma(1-\alpha)} \, . 
\end{equation}

Indeed, the Caputo derivative allows to write fractional differential equations (FDEs) of order $0<\alpha<1$ coupled to the usual initial value conditions at the origin, {\em i.e.}, involving just integer order derivatives. Since these initial value problems have a more straightforward physical interpretation, in this work we focus on regularized Caputo-like derivatives, though this is done without loss of generality since recasting the arguments presented here in the RL framework does not involve any particular complication.

Before moving on with the analysis of the Scarpi derivative it is important to recall some important properties of standard fractional operators. Specifically, we recall that the LT of the kernels involved in the definitions of the Caputo derivative (\ref{eq:DCaputo}) and of the RL integral (\ref{eq:RLIntegral}) are 
\begin{equation}\label{eq:LTKernelStandard}
    \Phi(s) \coloneqq {\mathcal L} \Bigl( \phi(t) \, ; \, s \Bigr) = s^{\alpha-1}
    , \quad
    \Psi(s) \coloneqq {\mathcal L} \Bigl( \psi(t) \, ; \, s \Bigr) = \frac{1}{s^{\alpha}} 
\end{equation}
and, by taking advantage of these LTs, one finds that, assuming that the function $f(t)$ admits the LT $F(s)$, the LTs of (\ref{eq:DCaputo}) and (\ref{eq:RLIntegral}) are
\begin{equation}
{\mathcal L} \Bigl(
\DC^{\alpha}_0 f(t) \, ; \, s \Bigr) = s^{\alpha} F(s) - s^{\alpha-1} f(0)
, \quad
{\mathcal L} \Bigl(
\IRL^{\alpha}_0 f(t) \, ; \, s \Bigr) = \frac{1}{s^{\alpha}} F(s) \, .
\end{equation}

\subsection{A variable-order fractional derivative}
In order to provide a variable-order  generalization of (\ref{eq:DCaputo}) 
we consider a function
\[
\alpha(t):[0,T]\to (0,1)
\]
assumed to be locally integrable on $[0,T]$. The restriction on the image of $\alpha(t)$ to $(0,1)$ is done to avoid further technical complications.

The main idea by Scarpi presented in the pioneering works \cite{Scarpi1972a,Scarpi1972b,Scarpi1973} 
was to define a fractional derivative of variable order $\alpha(t)$ by generalizing the representation (\ref{eq:LTKernelStandard}) in the LT domain of the kernel $\phi(t)$, rather than in the time domain. 

If one considers the constant function $\alpha(t) \equiv \alpha$, $t > 0$, its LT is $A(s)=\alpha/s$ and hence one can trivially infer that $\Phi(s)$ and $\Psi(s)$ in (\ref{eq:LTKernelStandard}), can be recast in terms of $A(s)$ as 
\[
\Phi(s)=s^{sA(s)-1} \, \quad \Psi(s)=s^{-sA(s)} .
\]

Thus, Scarpi's idea consists in extending this simple argument to any non-constant locally integrable function $\alpha(t)$ with LT  
\[
    A(s) \coloneqq {\mathcal L} \Bigl( \alpha(t) \, ; \, s \Bigr) = \int_0^{\infty} \eu^{-st} \alpha(t) \du t ,
\]
and define a variable-order derivative by means of a  convolution similar to those in (\ref{eq:DCaputo}) and (\ref{eq:DRL}). We now formalize this idea in the framework of the theory of Generalized Fractional Derivatives \cite{Luchko2021_Mathematics,Luchko2021_Symmetry,Kochubei2011,Luchko2020_FCAA}.

\begin{defn}\label{defn:ScarpiDerivative}
Let $\alpha \, :  \, [0,T]\to(0,1)$ be a locally integrable function, with $A(s)$ being its  LT, and let $f\in L_1[0,T]$. The regularized (Caputo–Dzhrbashyan type) Scarpi fractional derivative $\DS^{\alpha(t)}_0$  of variable order $\alpha(t)$ is defined as 
\begin{equation}\label{eq:ScarpiDerivativeConvlutionFirstDefinition}
\DS^{\alpha(t)}_0 f(t) \coloneqq \frac{\du}{\du t} \int_0^t \phi_{\alpha}(t-\tau) f(\tau) \du \tau - \phi_{\alpha}(t) f(0)
, \quad t \in (0,T] ,
\end{equation}
where the kernel function $\phi_{\alpha}(t)$ is the inverse LT
\begin{equation}\label{eq:KernelDerivative}
\phi_{\alpha}(t) \coloneqq {\mathcal L}^{-1} \Bigl( \Phi_{\alpha}(s) \, ; \, t \Bigr) , \quad \Phi_{\alpha}(s) = s^{s A(s)-1}. 
\end{equation}
\end{defn}

From the practical perspective it is often useful to recast a fractional operator in the standard Caputo representation for fractional derivatives. Thus,

\begin{prop}\label{prop:ScarpiDerivative}
Let $\alpha \, :  \, [0,T]\to(0,1)$ be a locally integrable function, with $A(s)$ denoting its LT, and let $\phi_{\alpha}(t)$ be the inverse LT of $\Phi_{\alpha}(s) = s^{s A(s)-1}$. If $f\in AC[0,T]$ then 
\begin{equation}\label{eq:ScarpiDerivativeConvlution}
\DS^{\alpha(t)}_0 f(t) = \int_0^t \phi_{\alpha}(t-\tau) f'(\tau) \du \tau
, \quad t \in [0,T] ,
\end{equation}
almost everywhere.
\end{prop}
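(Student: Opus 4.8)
The plan is to reduce \eqref{eq:ScarpiDerivativeConvlution} to the classical computation that identifies the Caputo derivative with the regularized Riemann--Liouville one, now carried out with the Gel'fand--Shilov-type kernel $\psi$ replaced by $\phi_{\alpha}$. First, since $f\in AC[0,T]$, I would use \eqref{eq:AbsoluteContinous} to split $f(\tau)=f(0)+\int_0^\tau f'(s)\,\du s$ and insert this into the convolution in \eqref{eq:ScarpiDerivativeConvlutionFirstDefinition}. Writing $k_{\alpha}(t):=\int_0^t\phi_{\alpha}(\sigma)\,\du\sigma$ for the primitive of the kernel, an application of the Fubini--Tonelli theorem (legitimate once $\phi_{\alpha}\in L^1[0,T]$ and $f'\in L^1[0,T]$) gives
\[
\int_0^t\phi_{\alpha}(t-\tau)f(\tau)\,\du\tau = f(0)\,k_{\alpha}(t) + \int_0^t k_{\alpha}(t-\tau)\,f'(\tau)\,\du\tau ,
\]
so the first summand is explicit and the second is the convolution of $f'$ with the absolutely continuous function $k_{\alpha}$, which vanishes at the origin.

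Next I would differentiate this identity in $t$. The term $f(0)\,k_{\alpha}(t)$ contributes $f(0)\,\phi_{\alpha}(t)$ for a.e.\ $t$, because $k_{\alpha}'=\phi_{\alpha}$ almost everywhere. For the remaining convolution I would invoke the elementary rule that, for $g\in AC[0,T]$ and $h\in L^1[0,T]$,
\[
\frac{\du}{\du t}\int_0^t g(t-\tau)h(\tau)\,\du\tau = g(0)\,h(t) + \int_0^t g'(t-\tau)h(\tau)\,\du\tau \qquad \text{a.e.},
\]
which itself follows by writing $g(t-\tau)=g(0)+\int_0^{t-\tau}g'(u)\,\du u$ and exchanging the order of integration once more. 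Applied with $g=k_{\alpha}$ (hence $g(0)=0$) and $h=f'$, this yields $\frac{\du}{\du t}\int_0^t k_{\alpha}(t-\tau)f'(\tau)\,\du\tau=\int_0^t\phi_{\alpha}(t-\tau)f'(\tau)\,\du\tau$ a.e.

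Combining the two pieces gives
\[
\frac{\du}{\du t}\int_0^t\phi_{\alpha}(t-\tau)f(\tau)\,\du\tau = f(0)\,\phi_{\alpha}(t) + \int_0^t\phi_{\alpha}(t-\tau)f'(\tau)\,\du\tau ,
\]
and substituting this into \eqref{eq:ScarpiDerivativeConvlutionFirstDefinition} the boundary term $\phi_{\alpha}(t)f(0)$ cancels, leaving exactly \eqref{eq:ScarpiDerivativeConvlution}. As a consistency check one may also pass to the Laplace domain: using \eqref{eq:KernelDerivative}, both sides have transform $s^{sA(s)}F(s)-s^{sA(s)-1}f(0)$, but the time-domain argument above is preferable since it delivers the stated ``almost everywhere'' conclusion without assuming $f$ is Laplace-transformable.

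The only genuinely delicate point is the regularity of the kernel $\phi_{\alpha}$: one needs $\phi_{\alpha}\in L^1[0,T]$ (equivalently, $k_{\alpha}$ absolutely continuous on $[0,T]$) for the two Fubini exchanges and for the term-by-term differentiation to be valid, and indeed for the objects in Definition \ref{defn:ScarpiDerivative} to be well defined as ordinary functions rather than distributions. In the constant-order case $\phi_{\alpha}(t)=t^{-\alpha}/\Gamma(1-\alpha)$ this is immediate from $\alpha<1$; in the variable-order setting it is a consequence of the assumptions on $\alpha(t)$ analysed in Section \ref{S:NecessaryAssumptions}. Granting that, the rest is the routine convolution algebra sketched above.
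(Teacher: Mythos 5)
Your argument is correct and follows essentially the same route as the paper's proof: split $f(\tau)=f(0)+\int_0^\tau f'$ by absolute continuity, exchange the order of integration (the paper writes the resulting term as $\int_0^t(\phi_\alpha*f')(\tau)\,\du\tau$ rather than as the convolution with the primitive $k_\alpha$, but these are the same identity), differentiate, and cancel the $\phi_{\alpha}(t)f(0)$ term. Your explicit remark that $\phi_{\alpha}\in L^1[0,T]$ is needed for the Fubini exchanges is a useful precision that the paper leaves implicit.
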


\begin{proof}
Since $f\in AC[0,T]$, in view of (\ref{eq:AbsoluteContinous}) the integral in (\ref{eq:ScarpiDerivativeConvlutionFirstDefinition}) reads 
\[
    \int_0^t \phi_{\alpha}(t-\tau) f(\tau) \du \tau
    = \int_0^t \phi_{\alpha}(t-\tau) f(0) \du \tau +
     \int_0^t \phi_{\alpha}(t-\tau) \int_0^{\tau} f'(s) \du s \du \tau \, .
\]
Exchanging the order of integration in the second piece one finds  
\[
    \int_0^t \phi_{\alpha}(t-\tau) f(\tau) \du \tau
    = \int_0^t \phi_{\alpha}(t-\tau)  \du \tau f(0) +
     \int_0^t \left(\int_0^{\tau}   \phi_{\alpha}(\tau-s) f'(s) \du s \right)\du \tau ,
\]
then differentiating both sides with respect to $t$ one gets 
\[
    \frac{\du}{\du t} \int_0^t \phi_{\alpha}(t-\tau) f(\tau) \du \tau
    = \phi_{\alpha}(t) f(0) +
\int_0^{t} \phi_{\alpha}(t-s) f'(s) \du s ,
\]
that concludes the proof.
\end{proof}

Clearly, the Scarpi derivative reduces to the standard Caputo one when $\alpha(t)$ becomes constant. Furthermore, from well-known properties of the LT one immediately finds that
\begin{equation}\label{eq:ScarpiDerDefinitionLT}
   {\mathcal L} \Bigl( \DS^{\alpha(t)}_0 f(t) \, ; \, s \Bigr) =  s^{s A(s)} F(s) - s^{s A(s)-1} f(0) \,  .
\end{equation}
However, finding an explicit representation of the kernel $\phi_{\alpha}(t)$ is not always possible and in one of the following sections we will explore some computational approaches to this problem. 

\begin{remark}
Note that Scarpi did not consider a variable-order derivative regularized in the Caputo–Dzhrbashyan way in his 1972 and 1973 works. Nonetheless, since such a regularization has relevant implications we believe that it is of grater interest to deal with this formulation of the Scarpi derivative.
\end{remark}

\subsection{A corresponding variable-order fractional integral}

It is of interest, especially for solving differential equations, to find an integral operator $\IS^{\alpha(t)}_0$ of convolution type, with some kernel $\psi_{\alpha}(t)$, such that the fundamental theorem of fractional calculus holds also for the Scarpi derivative, namely 
\begin{equation}\label{eq:FundamentalTheorem}
\DS^{\alpha(t)}_0 \IS^{\alpha(t)}_0 f(t) = f(t)
, \quad
\IS^{\alpha(t)}_0 \DS^{\alpha(t)}_0 f(t) = f(t) - f(0).
\end{equation} 
For this to be true the two kernels $\phi_{\alpha}(t)$ and $\psi_{\alpha}(t)$ must satisfy the Sonine equation  \cite{Luchko2021_Symmetry,Sonine1884,Kochubei2011,SamkoCardoso2003b,SamkoCardoso2003a}
\begin{equation}\label{eq:SonineEquation}
    \int_0^t \phi_{\alpha}(t-\tau) \psi_{\alpha}(\tau) = 1, \quad t > 0 , 
\end{equation}
and, in this case,  $\phi_{\alpha}(t)$ and $\psi_{\alpha}(t)$ are said to form a Sonine pair. Sonine pairs have been 
extensively studied in the literature, see {\em e.g.}, \cite{SamkoCardoso2003b,SamkoCardoso2003a}. 

Given a generic function $\phi_{\alpha}(t)$, finding the corresponding  $\psi_{\alpha}(t)$ such that the two functions form a Sonine pair is not trivial. However, this problem simplifies substantially
when working in the Laplace domain.

\begin{prop}
Let $\alpha \, :  \, [0,T]\to(0,1)$ be a locally integrable function, let $A(s)$ denote the LT of  $\alpha(t)$, and let $f\in L^1 [0,T]$. 
The integral operator 
\begin{equation}\label{eq:ScarpiIntegralConvolution}
\IS^{\alpha(t)}_0 f(t) = \int_0^{t} \psi_{\alpha}(t-\tau) f(\tau) \du \tau ,
\end{equation}
satisfies the conditions in (\ref{eq:FundamentalTheorem}) when
\begin{equation}\label{eq:KernelIntegral}
\psi_{\alpha}(t) \coloneqq {\mathcal L}^{-1} \Bigl( \Psi_{\alpha}(s) \, ; \, t \Bigr) , \quad \Psi_{\alpha}(s) = s^{-s A(s)}. 
\end{equation}
\end{prop}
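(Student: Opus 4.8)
The plan is to argue in the Laplace domain, where the definition (\ref{eq:KernelIntegral}) of $\psi_{\alpha}$ is engineered precisely so that things collapse. The one identity that makes everything work is the elementary product rule for the transforms: from (\ref{eq:KernelDerivative}) and (\ref{eq:KernelIntegral}),
\[
\Phi_{\alpha}(s)\,\Psi_{\alpha}(s) = s^{sA(s)-1}\,s^{-sA(s)} = \frac{1}{s} = {\mathcal L}(1;s),
\]
so, by the convolution theorem, $\phi_{\alpha}$ and $\psi_{\alpha}$ satisfy the Sonine equation (\ref{eq:SonineEquation}), i.e. $(\phi_{\alpha}*\psi_{\alpha})(t)=1$ for $t>0$. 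Hence $\phi_{\alpha},\psi_{\alpha}$ form a Sonine pair, which is the structural fact behind the fundamental theorem.

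For the first identity in (\ref{eq:FundamentalTheorem}) I would set $g=\IS^{\alpha(t)}_0 f=\psi_{\alpha}*f$. Since $f\in L^1[0,T]$, so is $g$, and $g(0)=0$ because the integral defining $g$ over the degenerate interval $[0,0]$ vanishes. Applying Definition \ref{defn:ScarpiDerivative} to $g$ and using associativity of the Volterra convolution,
\[
\DS^{\alpha(t)}_0 g(t) = \frac{\du}{\du t}\int_0^t \phi_{\alpha}(t-\tau)\,g(\tau)\,\du\tau - \phi_{\alpha}(t)\,g(0) = \frac{\du}{\du t}\bigl((\phi_{\alpha}*\psi_{\alpha})*f\bigr)(t) = \frac{\du}{\du t}\int_0^t f(\tau)\,\du\tau = f(t).
\]
The same conclusion follows even more quickly from (\ref{eq:ScarpiDerDefinitionLT}): since ${\mathcal L}(g;s)=\Psi_{\alpha}(s)F(s)$ and $g(0)=0$, one gets ${\mathcal L}(\DS^{\alpha(t)}_0 g;s)=s^{sA(s)}\,\Psi_{\alpha}(s)\,F(s)=F(s)$, and uniqueness of the Laplace transform gives $\DS^{\alpha(t)}_0 g = f$.

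For the second identity I would invoke Proposition \ref{prop:ScarpiDerivative} (so take $f\in AC[0,T]$, the natural setting for $\IS^{\alpha(t)}_0\DS^{\alpha(t)}_0$) to write $\DS^{\alpha(t)}_0 f=\phi_{\alpha}*f'$; then
\[
\IS^{\alpha(t)}_0\DS^{\alpha(t)}_0 f(t) = \psi_{\alpha}*(\phi_{\alpha}*f') = (\psi_{\alpha}*\phi_{\alpha})*f' = 1*f' = \int_0^t f'(\tau)\,\du\tau = f(t)-f(0),
\]
where the last equality is (\ref{eq:AbsoluteContinous}). Again this is transparent in the Laplace domain: by (\ref{eq:ScarpiDerDefinitionLT}), ${\mathcal L}(\DS^{\alpha(t)}_0 f;s)=s^{sA(s)}F(s)-s^{sA(s)-1}f(0)$, and multiplying by $\Psi_{\alpha}(s)=s^{-sA(s)}$ gives $F(s)-f(0)/s={\mathcal L}(f(t)-f(0);s)$.

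The algebra above is immediate; the only real work — and the point I expect to require care — is \emph{analytic} rather than \emph{algebraic}. One has to know that $\Phi_{\alpha}(s)=s^{sA(s)-1}$ and $\Psi_{\alpha}(s)=s^{-sA(s)}$ are honest Laplace transforms of locally integrable functions $\phi_{\alpha},\psi_{\alpha}$, so that the inverse transforms in (\ref{eq:KernelDerivative}) and (\ref{eq:KernelIntegral}) are meaningful and the convolution theorem, together with its converse, may be applied; one also needs the convolutions $\psi_{\alpha}*f$, $\phi_{\alpha}*f'$ and the triple products above to be well defined and associative in the relevant function spaces, and $\frac{\du}{\du t}\int_0^t f=f$ almost everywhere. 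These regularity matters are governed by the hypotheses on $\alpha(t)$ established in the next section, so in the proof I would either cite those conditions or add the standing assumption $\phi_{\alpha},\psi_{\alpha}\in L^1_{\mathrm{loc}}[0,T]$.
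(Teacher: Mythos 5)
Your proof is correct and rests on exactly the same key observation as the paper's: the Sonine equation in the Laplace domain reduces to the algebraic identity $\Phi_{\alpha}(s)\Psi_{\alpha}(s)=s^{sA(s)-1}s^{-sA(s)}=1/s$, which holds by construction. You go further than the paper by explicitly deducing both identities of (\ref{eq:FundamentalTheorem}) from the Sonine pair via associativity of the convolution (the paper stops at the Sonine check), and by flagging the analytic prerequisites — existence and local integrability of $\phi_{\alpha},\psi_{\alpha}$ — which the paper defers to the following section; both additions are appropriate and strengthen the argument.
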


\begin{proof}
If $\Psi_{\alpha}(s) = s^{-s A(s)}$, then $\phi_{\alpha}(t)$ and $\psi_{\alpha}(t)$ form a Sonine pair. Indeed, the Sonine equation (\ref{eq:SonineEquation}) in the Laplace domain reads  
\begin{equation}\label{eq:SonineEqationLT}
    \Phi_{\alpha}(s) \Psi_{\alpha}(s) = \frac{1}{s}
\end{equation}
which is trivially satisfied because of the definition of the Scarpi derivative that requires 
$\Phi_{\alpha}(s) = s^{s A(s)-1}$.

\end{proof}

It is worth mentioning that given two functions $\alpha ,\beta :[0,T]\to(0,1)$ a commutative index law 
\[
\IS^{\alpha(t)}_0 \IS^{\beta(t)}_0 f(t) = \IS^{\beta(t)}_0 \IS^{\alpha(t)}_0 f(t) = \IS^{\alpha(t)+\beta(t)}_0 f(t)
\]
can be inferred from Eqs (44) and (45) in \cite{Luchko2021_Mathematics}.

\section{Some necessary assumptions}\label{S:NecessaryAssumptions}

Clearly, not all transition functions $\alpha(t)$ will allow for a suitable definition of a pair of Scarpi-type variable-order fractional operators. In other words, not all $\alpha(t)$ are such that the corresponding kernels $\{\phi_{\alpha}(t) , \psi_{\alpha}(t)\}$ form a Sonine pair and hence $\{\DS^{\alpha(t)}_0 , \IS^{\alpha(t)}_0\}$ satisfy the fundamental theorem of calculus  (\ref{eq:FundamentalTheorem}).

Following the arguments by Samko and Cardoso in \cite{SamkoCardoso2003b}, or by Hanyga in \cite{Hanyga2020}, a necessary requirement  to ensure that two functions $\phi(t)$ and $\psi(t)$ form a Sonine pair (without moving to the realm of distributions) is for them to have an integrable singularity at the origin. This is further supported by the analysis in \cite{DiethlemGarrappaGiustiStynes2020} where it was shown that operators based on regular kernels can satisfy the fundamental theorem of fractional calculus (\ref{eq:FundamentalTheorem}) 
only if their action is restricted 
to spaces of functions with severe (and somewhat artificial) constraints (see also \cite{Stynes2018}).

A more detailed characterization of Sonine kernels has been investigated in  \cite{Luchko2021_FCAA,Luchko2021_Mathematics,Luchko2021_Symmetry}, where  major attention was devoted to kernels $\phi(t) \in {\mathcal C}_{-1}(0,T]$, i.e. such that $\phi(t) = t^{p-1}\hat{\phi}(t)$ with $t>0$, $p>0$ and $\hat{\phi}(t) \in {\mathcal C}[0,T]$. In the context of Scarpi’s theory, however, characterizing the kernel $\phi_{\alpha}(t)$ as a ${\mathcal C}_{-1}(0,T]$ function appears quite difficult since just its LT $\Phi_{\alpha}(s)$ is known. Here we do not pursue the goal of establishing a complete and general characterization of $\alpha(t)$ leading to kernel pairs that satisfy the Sonine condition; such a hard task is left for future investigations. Instead, here we focus on some minimal arguments that can be employed to grant the viability of our approach in some simplified scenarios. 

Consider a given transition function $\alpha(t)$ for which the LT $A(s)$ exists, then the kernels $\phi_{\alpha}(t)$ and $\psi_{\alpha}(t)$ automatically satisfy the Sonine equation (\ref{eq:SonineEquation}) provided that $\Phi_{\alpha}(s)$ and $\Psi_{\alpha}(s)$ admit real-valued inverse LTs. Indeed,  $\Phi_{\alpha}(s)$ and $\Psi_{\alpha}(s)$ satisfy (\ref{eq:SonineEqationLT}) by construction.

The real-valued character of the inverse LTs $\phi_{\alpha}(t)$ and $\psi_{\alpha}(t)$ of $\Phi_{\alpha}(s)$ and $\Psi_{\alpha}(s)$ is guaranteed by the following simple result.

\begin{prop}
Let $\alpha:[0,T] \to \Rset$ be a function whose LT is $A(s)$. If there exist functions  $\phi_{\alpha}(t)$ and $\psi_{\alpha}(t)$ which are LT-inverse of $\Phi_{\alpha}(s)=s^{sA(s)-1}$ and $\Psi_{\alpha}(s)=s^{-sA(s)}$, then they are real-valued functions.
\end{prop}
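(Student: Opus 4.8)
The plan is to exploit the \emph{conjugate (Schwarz) symmetry} enjoyed by Laplace transforms of real-valued functions, together with the uniqueness theorem for the Laplace transform. Since $\alpha(t)$ is real-valued, for every $s$ in the half-plane where $A(s)$ converges one has, using $\overline{\eu^{-st}}=\eu^{-\bar s t}$ for real $t$,
\begin{equation*}
A(\bar s) = \int_0^{\infty} \eu^{-\bar s t}\,\alpha(t)\,\du t = \overline{\int_0^{\infty} \eu^{-s t}\,\alpha(t)\,\du t} = \overline{A(s)} .
\end{equation*}
I would then combine this with the elementary power-function identity $\overline{z^{w}} = \bar z^{\,\bar w}$, valid for the principal branch whenever $z$ avoids the cut $(-\infty,0]$ — in particular whenever $\Re z > 0$, which is the case on the domain of $\Phi_\alpha$ and $\Psi_\alpha$. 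Substituting $A(\bar s)=\overline{A(s)}$ in the exponent and then applying this identity with $z=s$ gives
\begin{equation*}
\Phi_{\alpha}(\bar s) = \bar s^{\,\bar s A(\bar s) - 1} = \bar s^{\,\overline{sA(s)-1}} = \overline{s^{\,sA(s)-1}} = \overline{\Phi_{\alpha}(s)} ,
\end{equation*}
and the identical computation with exponent $-sA(s)$ yields $\Psi_{\alpha}(\bar s) = \overline{\Psi_{\alpha}(s)}$. Thus both $\Phi_\alpha$ and $\Psi_\alpha$ satisfy the reflection property $G(\bar s) = \overline{G(s)}$.

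Next I would transfer this symmetry back to the time domain. Let $\phi_\alpha$ be any (\emph{a priori} possibly complex-valued) function whose Laplace transform is $\Phi_\alpha$, which exists by hypothesis, and set $g(t) := \overline{\phi_\alpha(t)}$. Then, again using $\overline{\eu^{-st}}=\eu^{-\bar s t}$,
\begin{equation*}
{\mathcal L}\bigl( g(t) \,;\, s \bigr) = \int_0^{\infty} \eu^{-st}\,\overline{\phi_\alpha(t)}\,\du t = \overline{\int_0^{\infty} \eu^{-\bar s t}\,\phi_\alpha(t)\,\du t} = \overline{\Phi_\alpha(\bar s)} = \Phi_\alpha(s) ,
\end{equation*}
the last equality being precisely the reflection property established above. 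Hence $g$ and $\phi_\alpha$ have the same Laplace transform, and the uniqueness (Lerch) theorem forces $\overline{\phi_\alpha(t)} = \phi_\alpha(t)$ for almost every $t$, i.e. $\phi_\alpha$ is real-valued. The argument for $\psi_\alpha$ is verbatim the same. Alternatively, one could conjugate the Bromwich inversion integral directly, invoke the reflection property, and change variables $y\mapsto -y$ along the vertical contour to reach the same conclusion.

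The routine parts are the two interchanges of conjugation with the defining integrals, justified by the absolute convergence of the transforms on the relevant half-planes, and the invocation of Lerch's theorem. The one genuinely delicate point — and the step I would treat most carefully — is the power-function identity $\overline{z^{w}} = \bar z^{\,\bar w}$: it must be applied with a fixed branch, so one has to make sure that the branch implicitly intended in the definitions $\Phi_\alpha(s)=s^{sA(s)-1}$, $\Psi_\alpha(s)=s^{-sA(s)}$ is the principal one and that the base $s$ stays off the branch cut, which is automatic since these expressions only need to be evaluated for $\Re s > 0$. Once that is settled, the rest is bookkeeping.
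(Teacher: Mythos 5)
Your proposal is correct and follows essentially the same route as the paper: establish the Schwarz reflection property $\Phi_{\alpha}(\bar s)=\overline{\Phi_{\alpha}(s)}$ (and likewise for $\Psi_{\alpha}$) from the realness of $\alpha(t)$, and then transfer this back to the time domain via the fact that conjugating a function conjugates-and-reflects its Laplace transform. The only differences are cosmetic: you make the final appeal to Lerch's uniqueness theorem and the choice of principal branch explicit, whereas the paper leaves both implicit.
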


\begin{proof}
Let $s^{\star}$ and $g^{\star}(t)$ denote the complex conjugate of a complex variable $s$ and of a  complex-valued function $g(t)$, respectively, and observe that if $G(s)$ is the LT of $g(t)$, then $G^{\star}(s^{\star})$ is the LT of $g^{\star}(t)$. Therefore, to ensure that $\phi_{\alpha}(t)$ and $\psi_{\alpha}(t)$ are real-valued it is sufficient to show that $\Phi_{\alpha}(s) = \Phi^{\star}_{\alpha}(s^{\star})$ and $\Psi_{\alpha}(s) = \Psi^{\star}_{\alpha}(s^{\star})$.

Since $\alpha(t)$ is real, then $\alpha^{\star}(t) = \alpha(t)$ and hence $A^{\star}(s^{\star}) = A(s)$. Setting $G(s) = sA(s)-1$, for which one has that $G^{\star}({s}^\star)=G(s)$, then one finds
\[
\Phi^{\star}_{\alpha}(s^{\star}) 
= \Bigr(\eu^{G(s^{\star}) \ln s^{\star}} \Bigr)^{\star}
= \eu^{G^{\star}(s^{\star}) \bigl(\ln s^{\star} \bigr)^{\star}}
= \eu^{G(s) \ln s} = s^{G(s)} = \Phi_{\alpha}(s) \, ,
\]
by taking advantage of some elementary properties of complex functions. Similarly, one can show that  
$\Psi_{\alpha}(s) = \Psi^{\star}_{\alpha}(s^{\star})$.
\end{proof}

To find a necessary condition ensuring that $\Phi_{\alpha}(s)$ and $\Psi_{\alpha}(s)$ are LTs of some functions $\phi_{\alpha}(t)$ and $\psi_{\alpha}(t)$ we observe that if a complex-valued function $G(s)$ is the LT of $g(t)$, then $G(s) \to 0$ as $s\to \infty$. Therefore, if one assumes that $\alpha(t)$ admits a limit in $(0,1)$ as $t \to 0^+$, {\em i.e.},  
\[
\lim_{t \to 0^{+}} \alpha(t) = \bar{\alpha} \in (0,1), 
\]
then the initial value theorem \cite[\S 12.7]{LePage1980} for the LT implies that $sA(s) \to \bar{\alpha} \in (0,1)$ as $s\to \infty$. This ensures that $\Phi_{\alpha}(s)=s^{sA(s)-1} \to 0$ and $\Psi_{\alpha}(s)=s^{-sA(s)} \to 0$ as $s \to \infty$. Therefore one can conclude that any function $\alpha(t):[0,T]\to(0,1)$ admitting a LT is a suitable candidate for generating a pair of Scarpi variable-order operators $\{\DS^{\alpha(t)}_0 , \IS^{\alpha(t)}_0\}$ provided that $\Phi_{\alpha}(s)=s^{sA(s)-1}$ and $\Psi_{\alpha}(s)=s^{-sA(s)}$ are LTs of some functions $\phi_{\alpha}(t)$ and $\psi_{\alpha}(t)$.

Note that, for practical reasons, in this work we further require an explicit analytic expression for $A(s)$.

\subsection{Kochubei's General Fractional Calculus and Scarpi's operators} 

In Kochubei's General Fractional Calculus (GFC) \cite{Kochubei2011,Kochubei2019a}, the operator 
$$
D_{\phi} f(t) = \frac{\du}{\du t} \int _0 ^t  \phi (t-\tau) f(\tau) \du \tau - \phi(t) f(0)
, \quad t \in (0,T] ,
$$
defines a Caputo–Dzhrbashyan type General Fractional Derivative
if the kernel $\phi(t)$ has the following properties:
\begin{description}
\item[A1:] the LT $\Phi(s)$ of $\phi(t)$ exists for all $s>0$; 
\item[A2:] $\Phi(s)$ is a  Stieltjes function, {\em i.e.}, it admits the integral representation 
\begin{equation}\label{eq:spercrep}
    \Phi(s) = 
    \frac{a}{s} + b + 
    \int_0^{\infty} \frac{K_{\alpha}^{\Phi}(r)}{s+r} \du r \, ,
\end{equation}
with $a,b \ge 0$ and $K_{\alpha}^{\Phi}(r)\ge 0$ a (non-negative) spectral distribution;
\item[A3]: $\Phi(s) \to 0$ and $s\Phi(s) \to \infty$ as $s \to \infty$,
\item[A4]: $\Phi(s) \to \infty$ and $s\Phi(s) \to 0$ as $s \to 0$.
\end{description}
Then, it is easy to see that denoting by $\Psi (s) := 1/(s\Phi(s))$ one has that $\psi (t)$ and $\phi (t)$ form a Sonine pair. 

This theory might appear rather appealing for our purposes since it relies completely on the LT representation of the kernel $\phi(t)$. However, the aforementioned conditions further constrain $\phi(t)$ and $\psi(t)$ to be completely monotone (CM) functions. In other words, these conditions guarantee that the solution of the relaxation equation
$$
D_{\phi} f(t) = - \lambda f(t) \, , \,\, f(0^+) = f_0 \, , \,\, \lambda >0 
$$
is CM \cite{Kochubei2011,Kochubei2019a,Mainardi2010}. 

Requiring the solution of a relaxation equation of variable order to be CM is a bit too restrictive in this case and makes GFC hardly applicable to Scarpi's theory.  In fact, as we shall see with some numerical examples in the following section, even very simple transition functions $\alpha(t)$ yield ``derivative kernels'' $\phi_{\alpha}(t)$ that violate {\bf A2}, thus 
supporting the conclusion that Kochubei's GFC is not the proper theoretical framework
for this type of variable-order operators.

\section{Physically relevant examples of transition functions}\label{S:TransitionFunctions}

In this Section we present some examples of variable-order functions $\alpha(t)$. We confine to potentially physically interesting scenarios where $\alpha(t)$ shows a monotone transition from an initial order $\alpha_1$ to a final order $\alpha_2$, where the latter is only reached asymptotically as $t\to \infty$. Various expressions for $\alpha(t)$ are presented here and  for each of them we show the emerging kernels $\phi_{\alpha}(t)$ and $\psi_{\alpha}(t)$ associated to the corresponding $\DS^{\alpha(t)}_0$ and $\IS^{\alpha(t)}_0$ defined respectively in (\ref{eq:ScarpiDerivativeConvlution}) and (\ref{eq:ScarpiIntegralConvolution}).

Even when $\alpha(t)$ and its LT $A(s)$ are given by simple expressions, in general it is not possible to provide an explicit representation of the kernels $\phi_{\alpha}(t)$ and $\psi_{\alpha}(t)$. Therefore, they need to be  evaluated numerically by means of LT inversion of  $\Phi_{\alpha}(s)$ and  $\Psi_{\alpha}(s)$. On the one hand, this complication constituted the main reason why Scarpi's ideas have been overlooked for so long. On the other hand, over the years some very powerful methods for the numerical inversion of the LT have been developed and can be easily exploited in this context. To lighten the presentation we avoid describing here the technical details about the numerical strategy adopted for the numerical inversion of LTs and we confine it to the Appendix \ref{S:NumericalInversionLT}.

\subsection{Example 1: Exponential transition}\label{SS:OT_Exp}

For $0<\alpha_1<\alpha_2 < 1$ and a real constant $c>0$, we consider the function 
$$
\alpha(t) = \alpha_2 + (\alpha_1 - \alpha_2) \eu^{-ct}
$$
describing a variable-order transition from $\alpha_1$ to $\alpha_2$ according to an exponential law with rate $-c$. It is simple to evaluate the LT of $\alpha(t)$ as
\[
    A(s) 
    = \int_{0}^{\infty} \eu^{-st} \alpha(t) \du t 
    = \frac{\alpha_2 c + \alpha_1 s}{s(c+s)}
\]
and, hence, 
\[
    \Phi_{\alpha}(s) = s^{sA(s) - 1} = s^{\frac{(\alpha_2-1)c + (\alpha_1-1)s}{c+s}}
    , \quad
    \Psi_{\alpha}(s) = s^{-s A(s)} = s^{-\frac{\alpha_2 c + \alpha_1 s}{c+s}}.
\]

The spectral distribution $K_{\alpha}^{\Phi}(r)$ that yields the integral representation \eqref{eq:spercrep} of $\Phi_{\alpha}(s)$ can be evaluated by means of the Titchmars \cite{Titchmarsh1986} inversion formula 
\[
	\begin{aligned}
K_{\alpha}^{\Phi}(r) 
&= \mp \frac{1}{\pi} {\rm Im} \bigl[ \Phi_{\alpha}(s) \bigr|_{s=r \eu^{\pm \iu \pi}} \bigr] \\
&= - \frac{1}{\pi} r^{ \frac{(\alpha_2-1)c - (\alpha_1-1)r}{c-r}} \sin \Bigl[ \frac{(\alpha_2-1)c - (\alpha_1-1)r}{c-r} \pi \Bigr] . 
\end{aligned}
\]

Since $K_{\alpha}^{\Phi}(r) \ge 0$ for $r \ge 0$ is satisfied only if $c=0$ or $\alpha_1=\alpha_2$, namely when the time-dependency of $\alpha(t)$ is suppressed, Kochubei's GFC theory does not apply and $\phi_{\alpha}(t)$ (as well as  the solution of the associated relaxation equation) clearly is not a CM function.

Although it is reasonable to assume, for some physical models, that $\alpha_1$ and $\alpha_2$ are close values, we shall consider distant enough values for these parameters, as illustrated in Figure \ref{fig:Fig_DecayEXP_alpha} for $\alpha_1=0.6$ and $\alpha_2=0.8$, in order to be able to graphically present the asymptotic behaviour of $\phi_{\alpha}(t)$ and $\psi_{\alpha}(t)$ in a nice way.

As one can see from Figure \ref{fig:Fig_DecayEXP_phi_psi}, the resulting kernels $\phi_{\alpha}(t)$ and $\psi_{\alpha}(t)$  start as the corresponding kernels of the standard fractional operators of order $\alpha_1$ and asymptotically converge to the kernels of the operators of order $\alpha_2$. This behaviour can be better appreciated in Figure \ref{fig:Fig_DecayEXP_phi_psi_log} where $\phi_{\alpha}(t)$ and $\psi_{\alpha}(t)$ are plotted in logarithmic scale.

\begin{figure}[tph]
\centering
\begin{tabular}{c}
\includegraphics[width=0.46\textwidth]{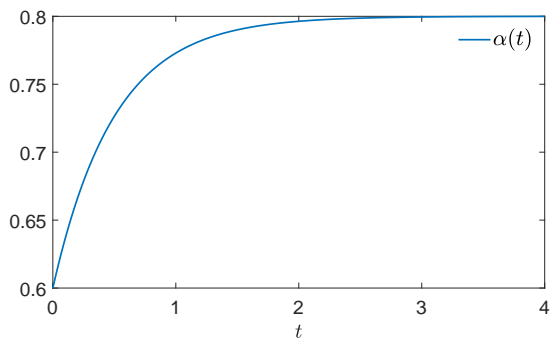} \
\end{tabular}
\caption{Plot of $\alpha(t)$ for variable-order transition of exponential type ($c=2.0$) from $\alpha_1=0.6$ to $\alpha_2=0.8$.} \label{fig:Fig_DecayEXP_alpha}
\end{figure}

\begin{figure}[tph]
\centering
\begin{tabular}{cc}
\includegraphics[width=0.46\textwidth]{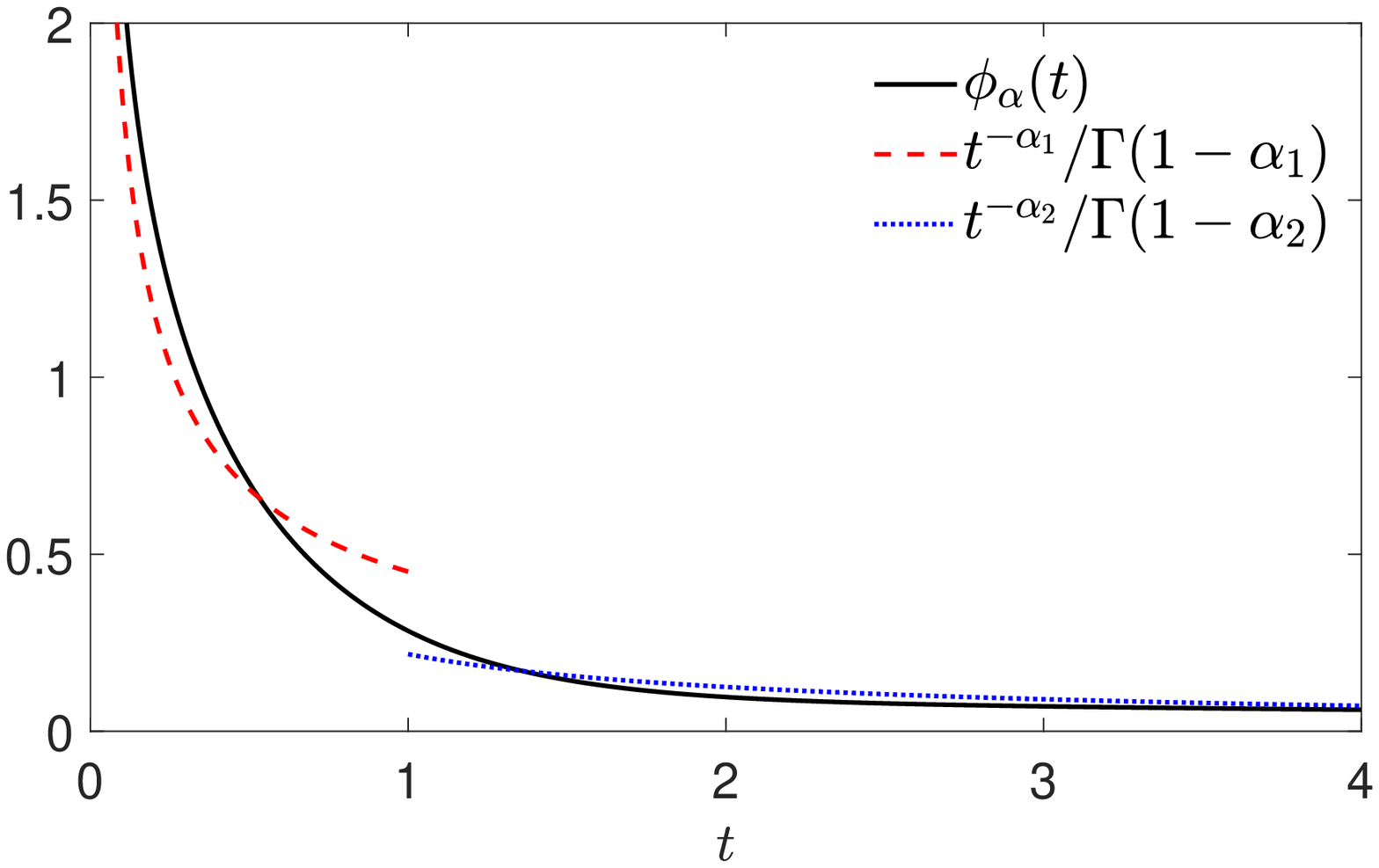} &
\includegraphics[width=0.46\textwidth]{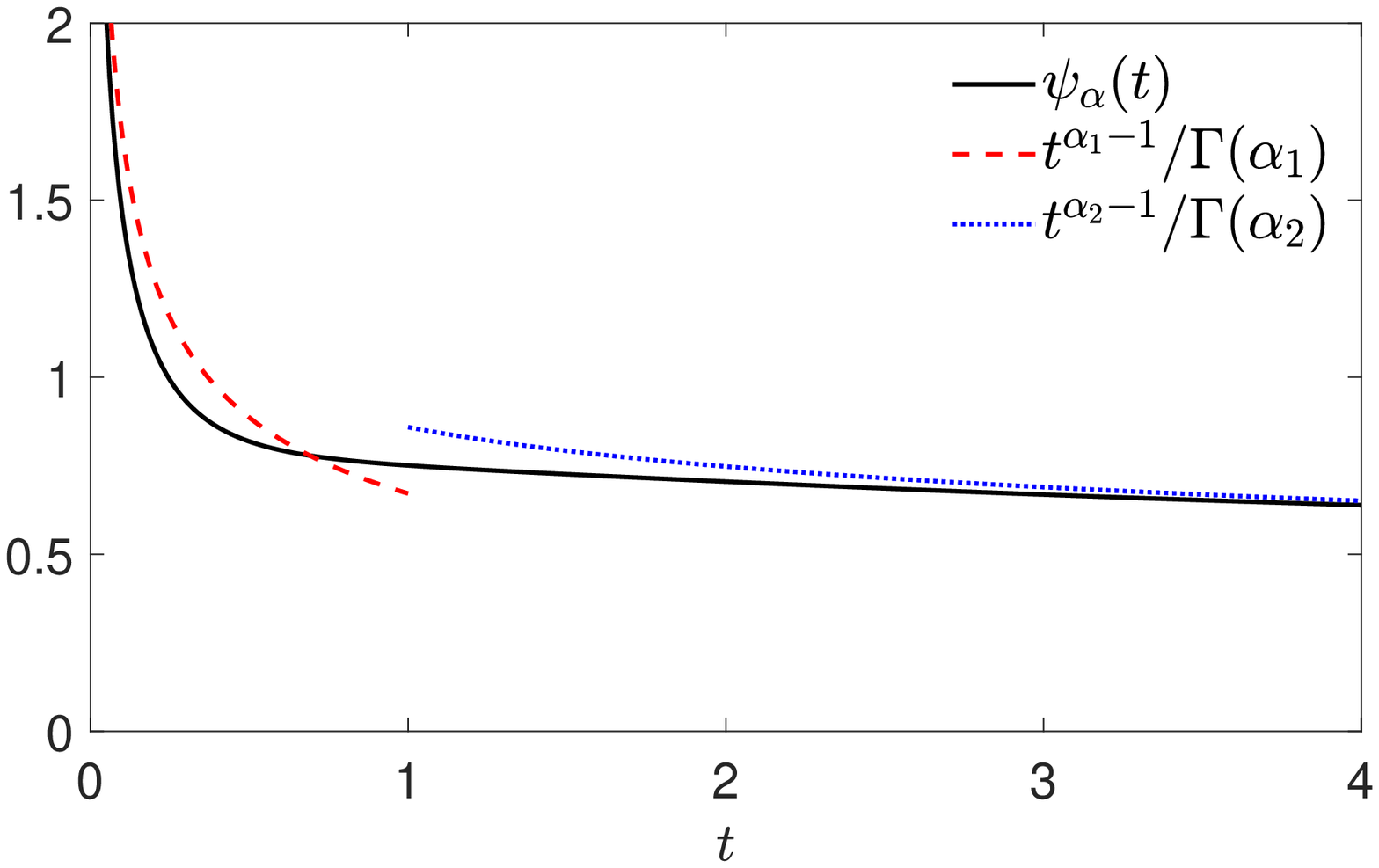} \
\end{tabular}
\caption{Plot of kernels $\phi_{\alpha}(t)$ (left plot) and $\psi_{\alpha}(t)$ (right plot) for variable-order transition of exponential type ($c=2.0$) from $\alpha_1=0.6$ to $\alpha_2=0.8$.} \label{fig:Fig_DecayEXP_phi_psi}
\end{figure}

\begin{figure}[tph]
\centering
\begin{tabular}{cc}
\includegraphics[width=0.46\textwidth]{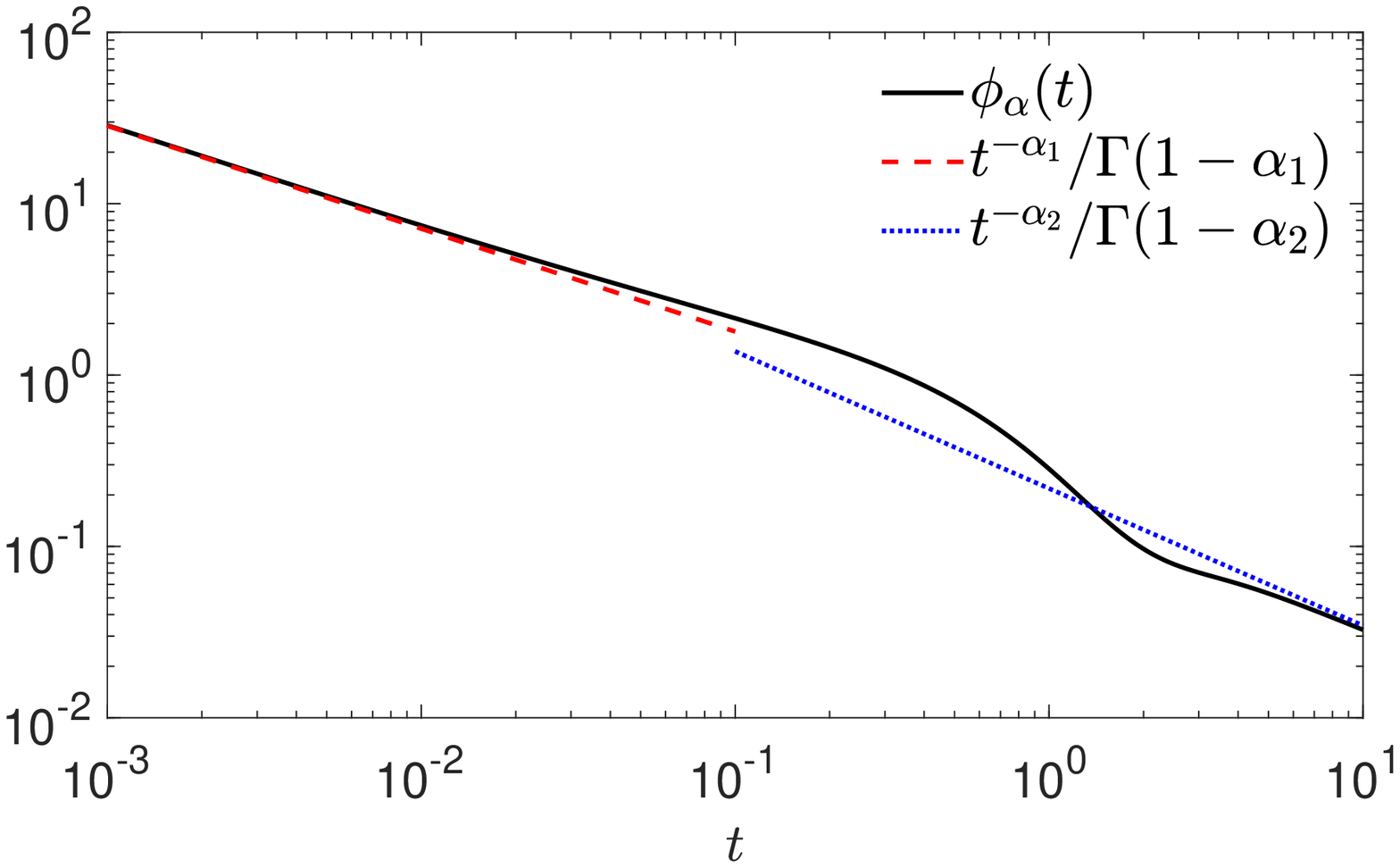} &
\includegraphics[width=0.46\textwidth]{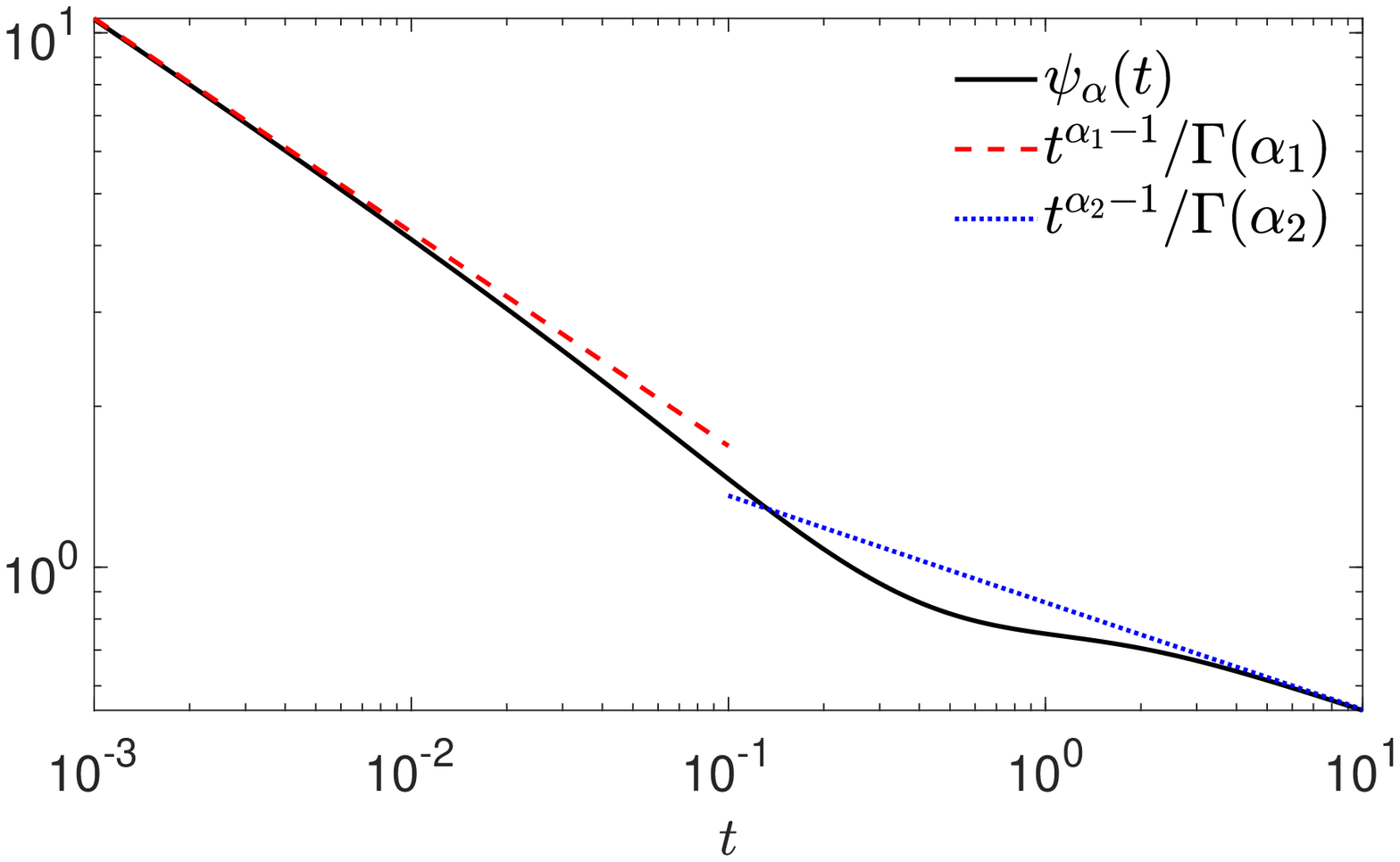} \
\end{tabular}
\caption{Plot of kernels $\phi_{\alpha}(t)$ (left plot) and $\psi_{\alpha}(t)$ (right plot) for variable-order transition of exponential type ($c=2.0$) from $\alpha_1=0.6$ to  $\alpha_2=0.8$ (logarithmic scale).} \label{fig:Fig_DecayEXP_phi_psi_log}
\end{figure}

\subsection{Example 2: Order transition of Mittag-Leffler type} 
The previous example can be generalized by replacing the exponential with the Mittag-Leffler (ML) function, {\em i.e.},
\[
    \alpha(t) = \alpha_2 + (\alpha_1 - \alpha_2) E_{\beta}(-c t^{\beta}) ,
\]
where
\[
    E_{\beta}(z) = \sum_{k=0}^{\infty} \frac{z^k}{\Gamma(\alpha k +\beta)} 
\]
is the one parameter ML function (see, for instance \cite{GorenfloKilbasMainardiRogosin2020}). This procedure gives a better control on
the transition from $\alpha_1$ to $\alpha_2$ thanks to the additional parameter $\beta$.

The representation of this variable-order function $\alpha(t)$ is provided in the left panel of Figure \ref{fig:Fig_DecayMLF_alpha} for $\beta=0.7$ and $c=2.0$.
Clearly, the transition presented in Section \ref{SS:OT_Exp} is just a particular case of the one presented here since $\eu^{x} = E_{1}(x)$.

It is now fairly easy to compute the LT of $\alpha(t)$, that reads \cite{GorenfloKilbasMainardiRogosin2020} 
\[
    A(s) 
    = \int_{0}^{\infty} \eu^{-st} \alpha(t) \du t 
    = \frac{\alpha_2 c + \alpha_1 s^{\beta}}{s(c+s^{\beta})} 
\]
and, hence,
\[
    \Phi_{\alpha}(s) = s^{sA(s) - 1} = s^{\frac{(\alpha_2-1)c + (\alpha_1-1)s^{\beta}}{c+s^{\beta}}}
    , \quad
    \Psi_{\alpha}(s) = s^{-s A(s)} = s^{-\frac{\alpha_2 c + \alpha_1 s^{\beta}}{c+s^{\beta}}} ,
\]
and also in this case the corresponding kernels $\phi_{\alpha}(t)$ and $\psi_{\alpha}(t)$ match the kernels of the standard fractional operators of order $\alpha_1$ and $\alpha_2$ in the limitig cases of the model, as shown in  Figures \ref{fig:Fig_DecayMLF_phi_psi} and \ref{fig:Fig_DecayMLF_phi_psi_log}. 

Note that the parameter $\beta$, similarly to the parameter $c$ in the previous case, alters the way in which this transition happens without affecting the initial and final values of the order.

\begin{figure}[tph]
\centering
\begin{tabular}{c}
\includegraphics[width=0.46\textwidth]{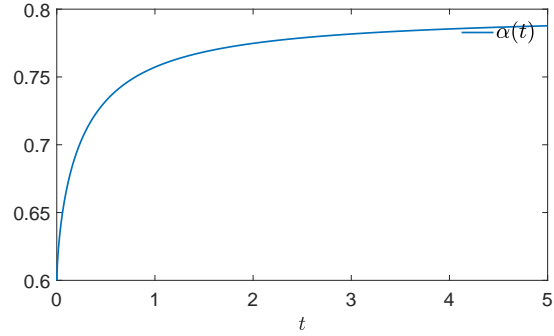}  \
\end{tabular}
\caption{Plot of $\alpha(t)$ for order transition of ML type ($c=2.0$ and $\beta=0.7$) from $\alpha_1=0.6$ to $\alpha_2=0.8$.} \label{fig:Fig_DecayMLF_alpha}
\end{figure}

\begin{figure}[tph]
\centering
\begin{tabular}{cc}
\includegraphics[width=0.46\textwidth]{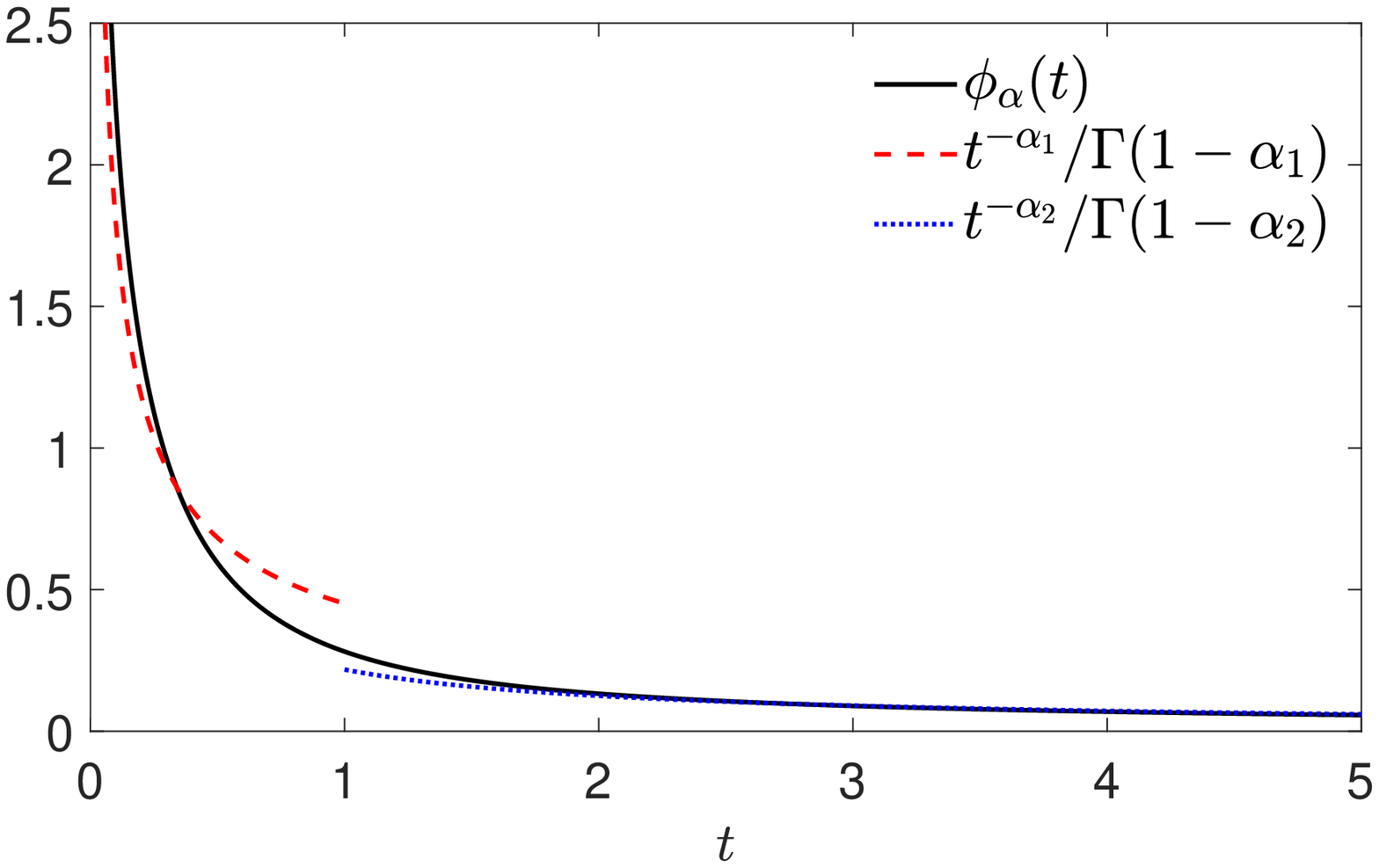} &
\includegraphics[width=0.46\textwidth]{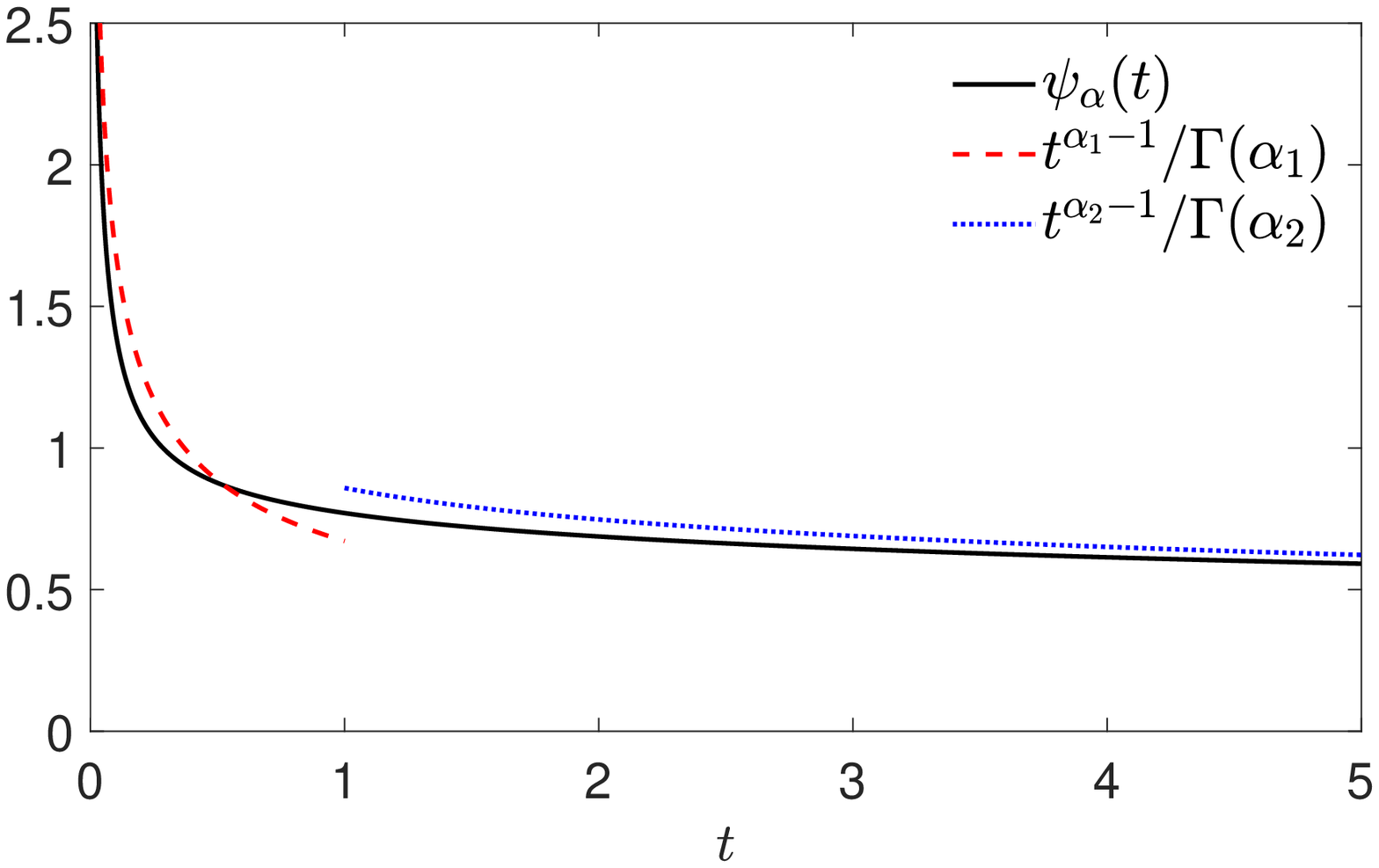} \
\end{tabular}
\caption{Plot of kernels $\phi_{\alpha}(t)$ (left plot) and $\psi_{\alpha}(t)$ (right plot) for order transition of ML type ($c=2.0$ and $\beta=0.7$) from $\alpha_1=0.6$ to $\alpha_2=0.8$.} \label{fig:Fig_DecayMLF_phi_psi}
\end{figure}

\begin{figure}[tph]
\centering
\begin{tabular}{cc}
\includegraphics[width=0.46\textwidth]{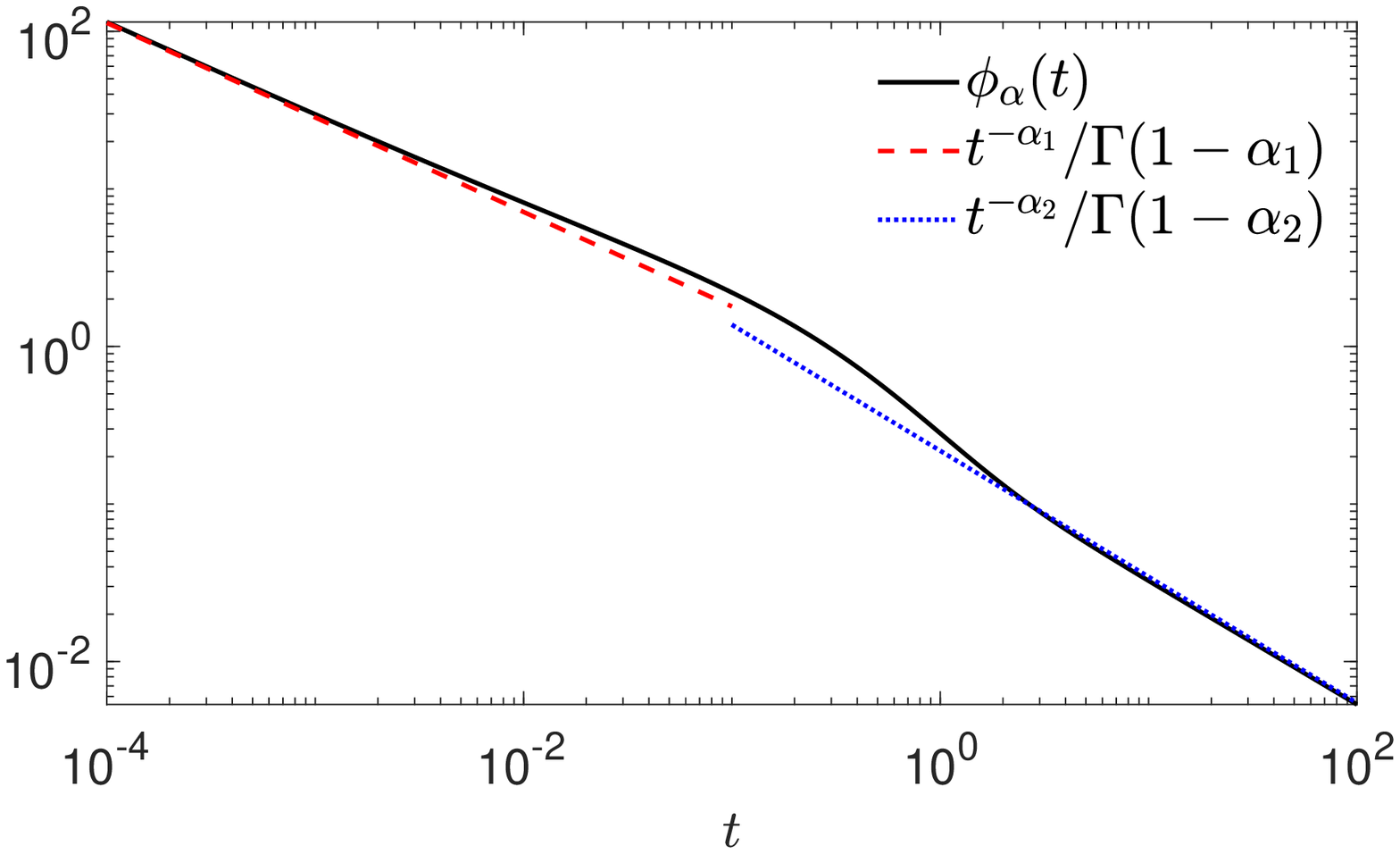} &
\includegraphics[width=0.46\textwidth]{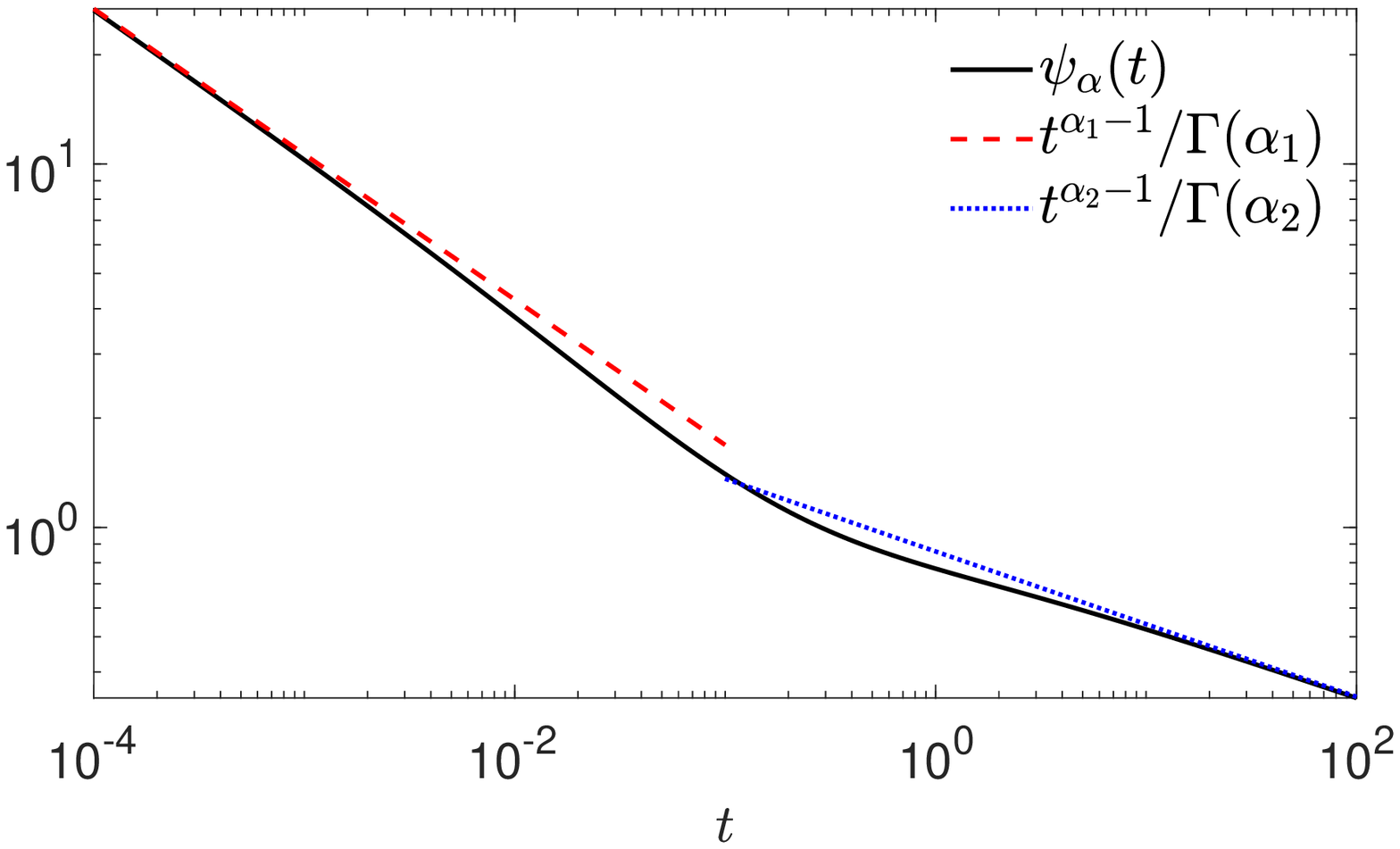} \
\end{tabular}
\caption{Plot of kernels $\phi_{\alpha}(t)$ (left plot) and $\psi_{\alpha}(t)$ (right plot) for order transition of ML type ($c=2.0$ and $\beta=0.7$) from $\alpha_1=0.6$ to $\alpha_2=0.8$ (logarithmic scale).} \label{fig:Fig_DecayMLF_phi_psi_log}
\end{figure}

In Figure \ref{fig:Fig_DecayMLF_alt_phit_b} we compare the behaviour of $\alpha(t)$ and $\psi_{\alpha}(t)$ for the decay of ML-type as we vary the parameter $\beta$. Observe that the case $\beta=1.0$ corresponds to the exponential decay case, as anticipated.

\begin{figure}[tph]
\centering
\begin{tabular}{cc}
\includegraphics[width=0.46\textwidth]{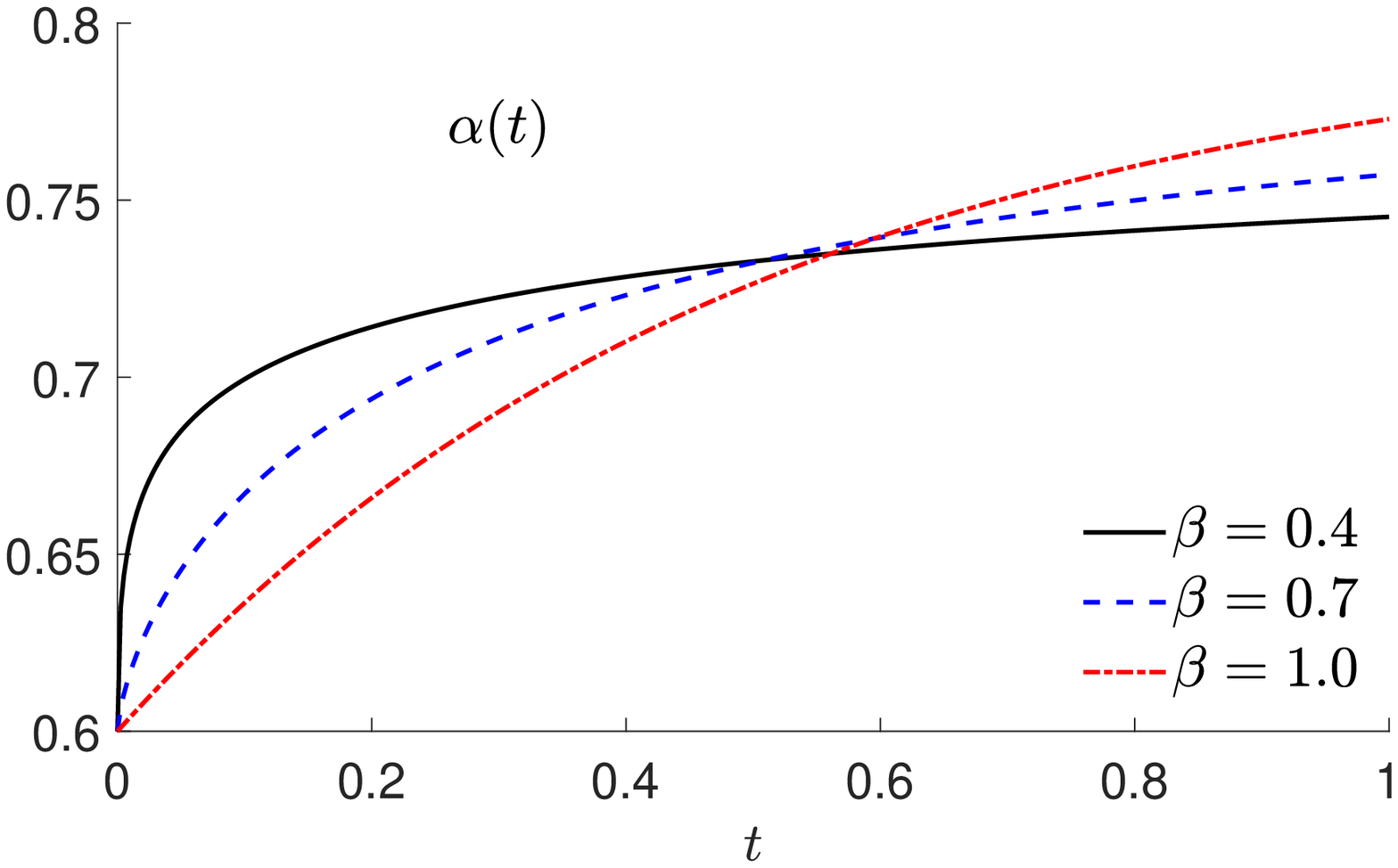} &
\includegraphics[width=0.46\textwidth]{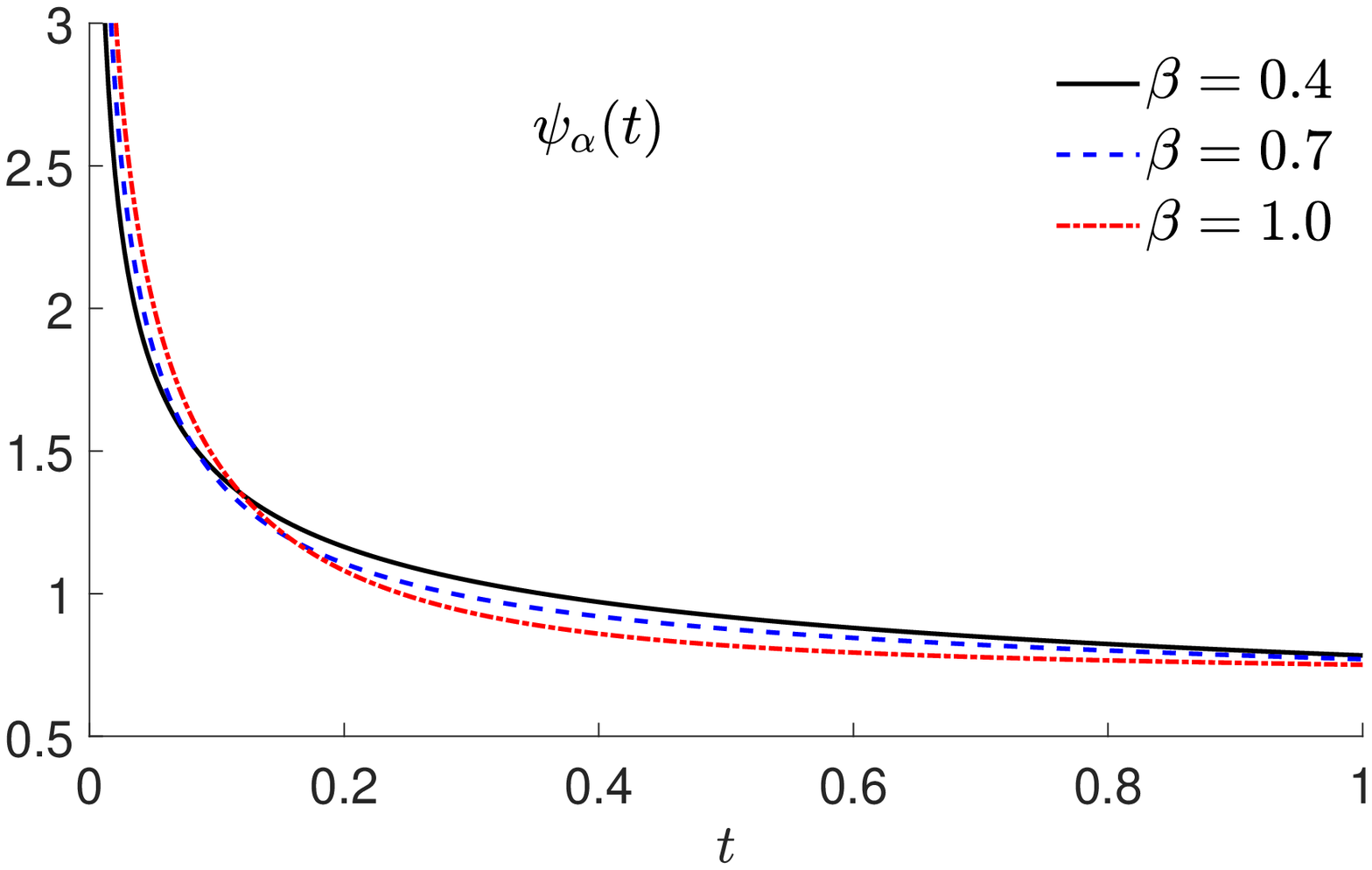} \
\end{tabular}
\caption{Plot of $\alpha(t)$ (left plot) and $\psi_{\alpha}(t)$ (right plot) for the variable order with decay of ML type with $\alpha_1=0.6$, $\alpha_2=0.8$, $c=2.0$ and different values of $\beta$.} \label{fig:Fig_DecayMLF_alt_phit_b}
\end{figure}

\subsection{Example 3: Order transition of {\rm erf} type}

Consider now for $0 < \alpha_1<\alpha_2 < 1$ and $c>0$ the function
\[
    \alpha(t) = \alpha_1 + (\alpha_2-\alpha_1) \erf(\sqrt{ct})
\]
representing a variable order which rapidly increases from $\alpha_1$ to $\alpha_2$ as shown in Figure \ref{fig:Fig_DecayERF_alpha}. Observe that this function can be considered, in some sense, as a further generalization of the variable-order function $\alpha(t)$ based on the ML function since
\[
\erf(\sqrt{ct}) = \sqrt{c}t^{\frac{1}{2}} E_{1,\frac{3}{2}}^{\frac{1}{2}}(-tc)
\]
with $E_{\alpha,\beta}^{\gamma}(z)$ the three-parameter ML function, also known as Prabhakar function (see, {\em e.g.}, \cite{GarrraGarrappa2018,GiustiColombaroGarraGarrappaPolitoPopolizioMainardi2020,GorenfloKilbasMainardiRogosin2020,Prabhakar1971}).

The Laplace transform of $\alpha(t)$ is
\[
    A(s) = \frac{\alpha_1}{s} + (\alpha_2-\alpha_1)\frac{\sqrt{c}}{s \sqrt{s+c}} 
    = \frac{\alpha_2 \sqrt{c} + \alpha_1 \bigl( \sqrt{s+c}-\sqrt{c}\bigr) }{s \sqrt{s+c}} 
\]
and the corresponding function $\phi_{\alpha}(t)$ and $\psi_{\alpha}(t)$ are depicted in Figures \ref{fig:Fig_DecayERF_alt_phi_psi} and \ref{fig:Fig_DecayERF_alt_phi_psi_log}.

\begin{figure}[tph]
\centering
\begin{tabular}{cc}
\includegraphics[width=0.46\textwidth]{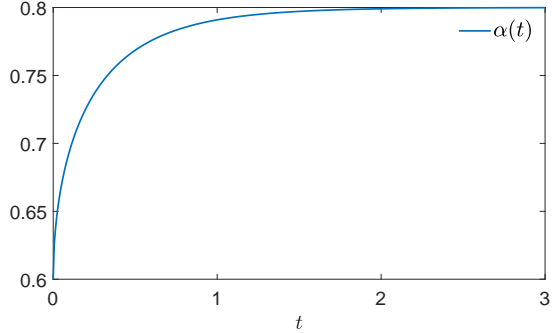}  \
\end{tabular}
\caption{Plot of $\alpha(t)$ for order transition of $\erf$ type ($c=2.0$) from  $\alpha_1=0.6$ to  $\alpha_2=0.8$.} \label{fig:Fig_DecayERF_alpha}
\end{figure}

\begin{figure}[tph]
\centering
\begin{tabular}{cc}
\includegraphics[width=0.46\textwidth]{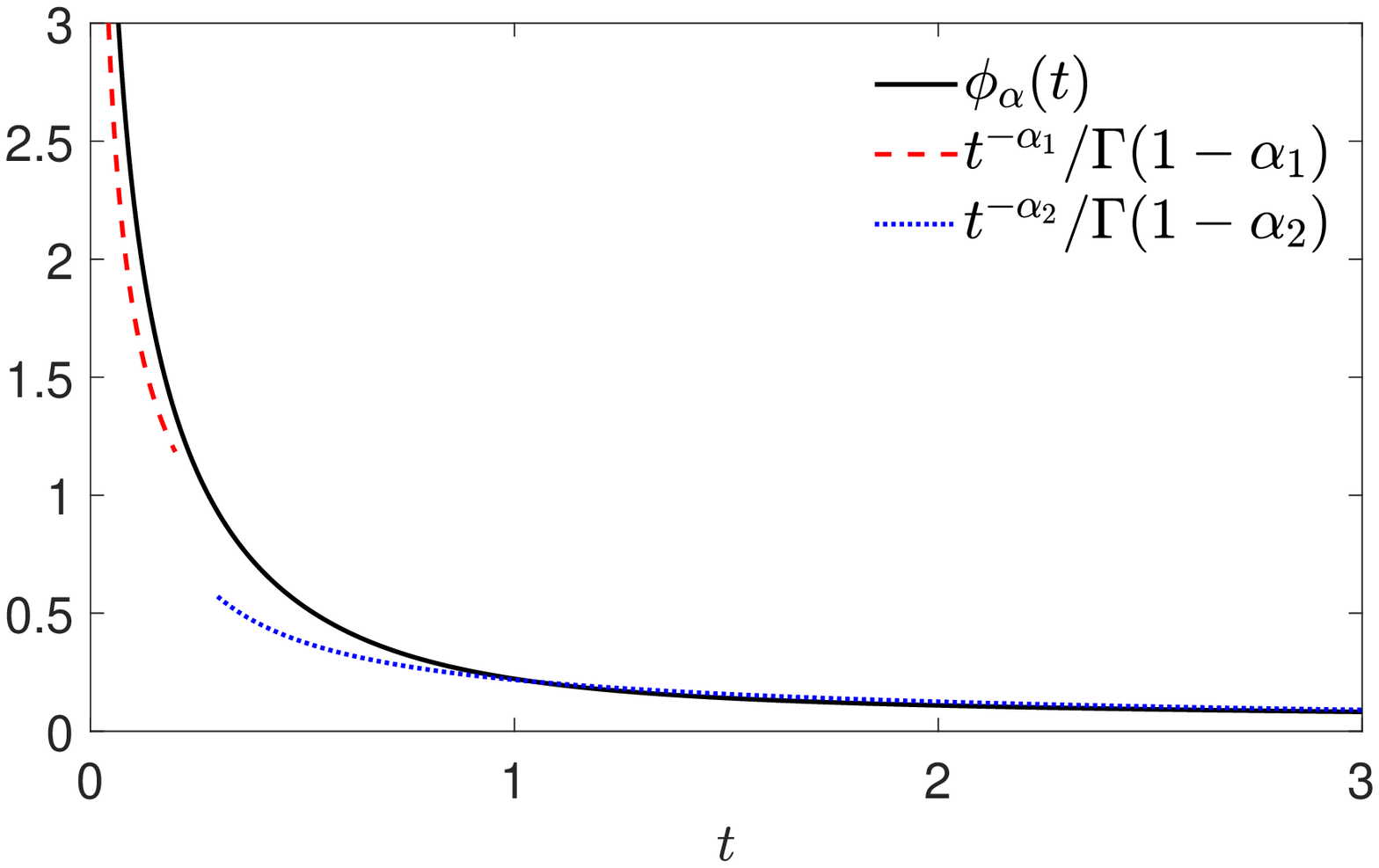} &
\includegraphics[width=0.46\textwidth]{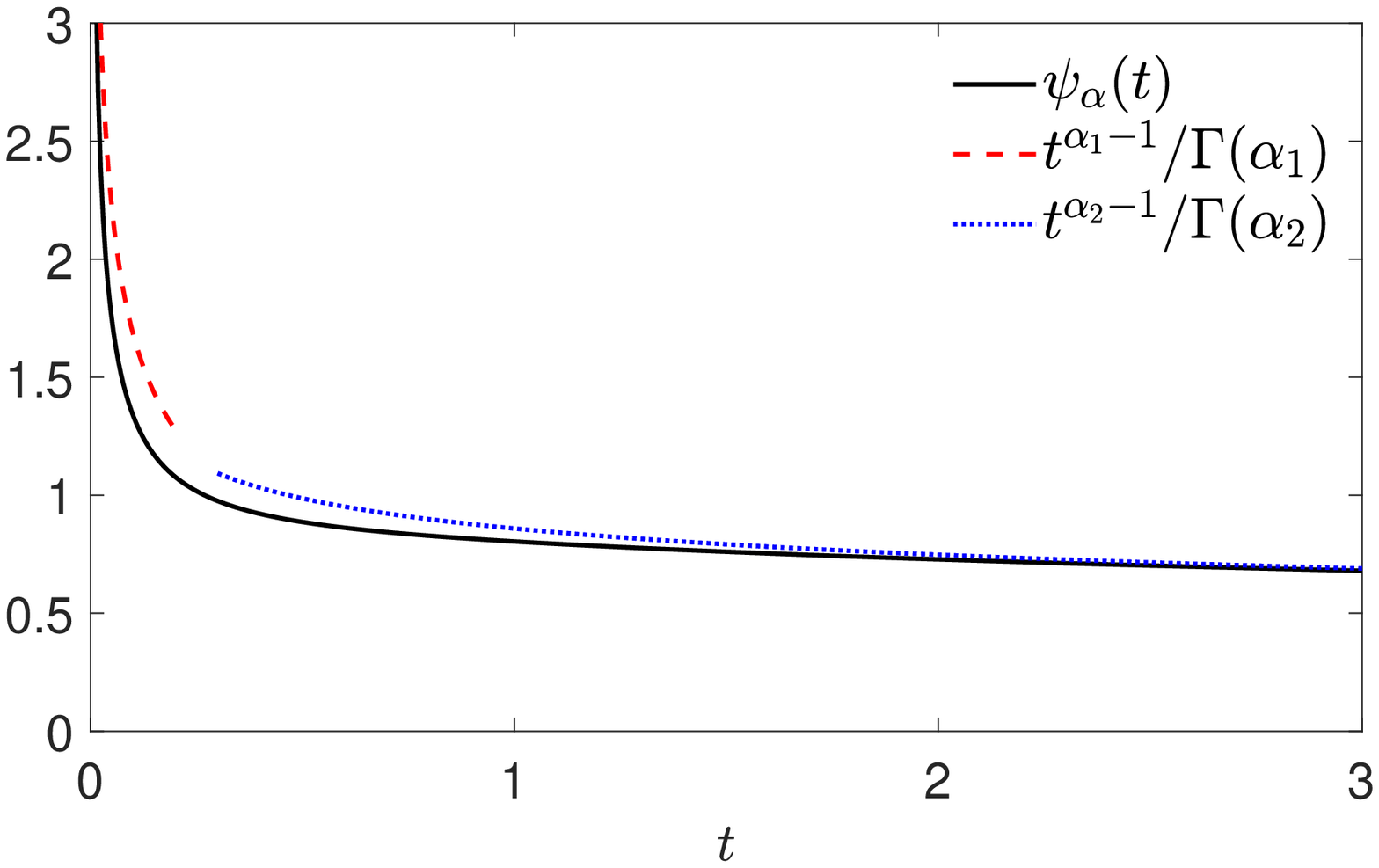} \
\end{tabular}
\caption{Plot of kernels $\phi_{\alpha}(t)$ (left plot) and $\psi_{\alpha}(t)$ (right plot) for order transition of $\erf$ type ($c=2.0$) from  $\alpha_1=0.6$ to $\alpha_2=0.8$.} \label{fig:Fig_DecayERF_alt_phi_psi} 
\end{figure}

\begin{figure}[tph]
\centering
\begin{tabular}{cc}
\includegraphics[width=0.46\textwidth]{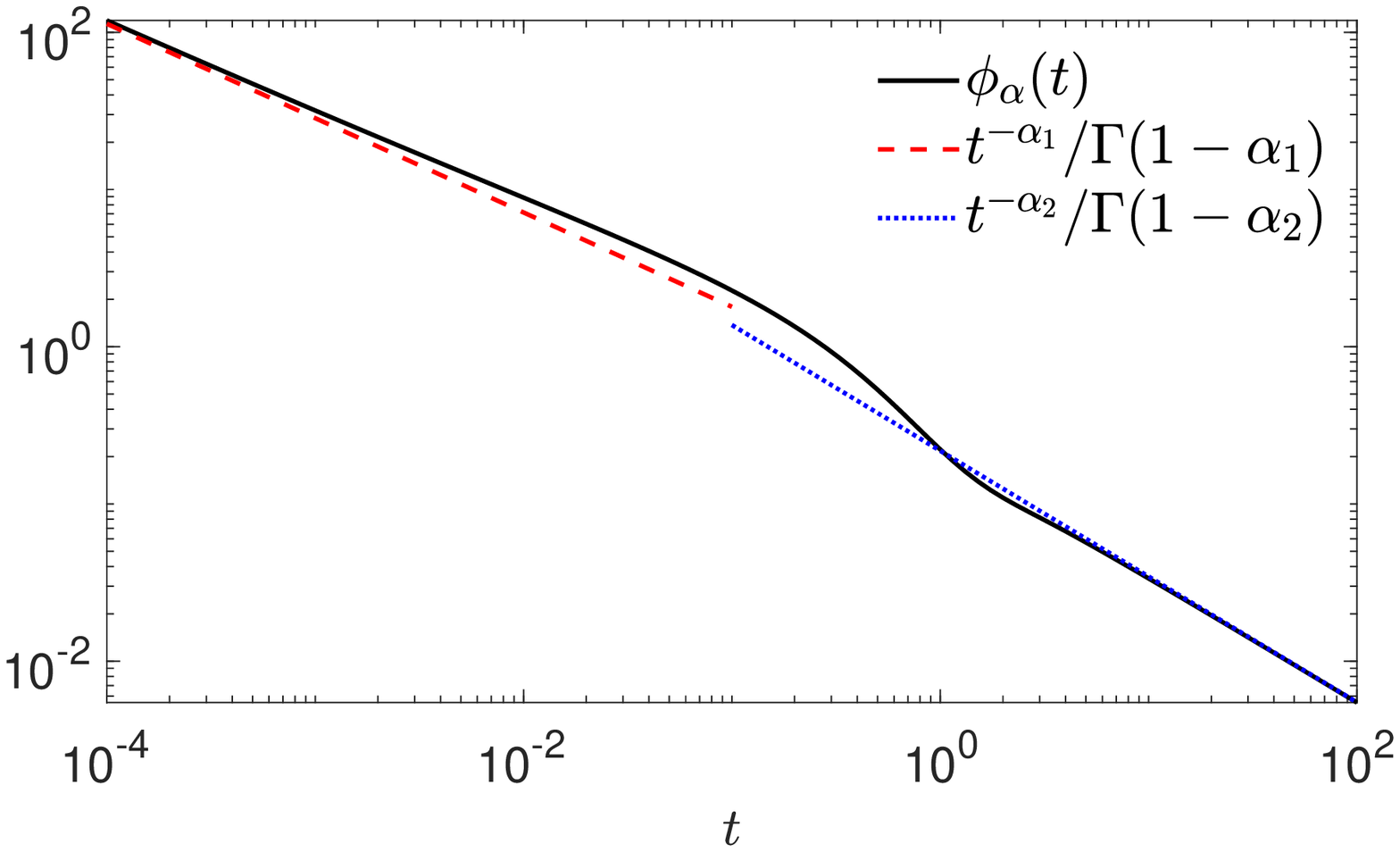} &
\includegraphics[width=0.46\textwidth]{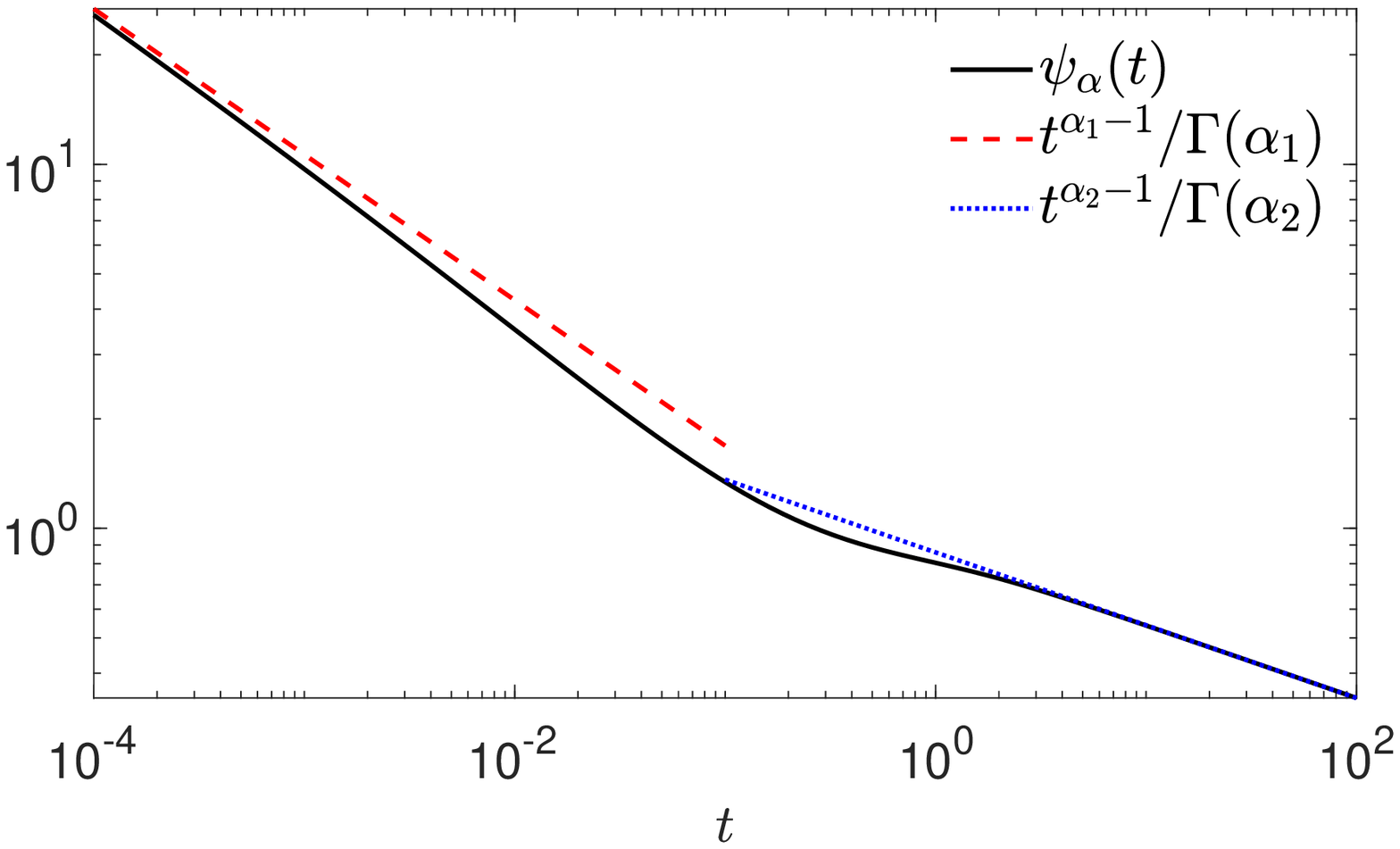} \
\end{tabular}
\caption{Plot of kernels $\phi_{\alpha}(t)$ (left plot) and $\psi_{\alpha}(t)$ (right plot) for order transition of $\erf$ type ($c=2.0$) from  $\alpha_1=0.6$ to $\alpha_2=0.8$ (logarithmic scale).} \label{fig:Fig_DecayERF_alt_phi_psi_log} 
\end{figure}

\section{Fractional relaxation equation with Scarpi derivative}\label{S:S_Relaxation}

The aim of this Section is to provide a preliminary investigation of the variable-order fractional relaxation equation 
\begin{equation}\label{eq:S_Relaxation}
    \left\{ \begin{array}{l}
    \DS^{\alpha(t)}_0 y(t) = - \lambda y(t) \\
    y(0) = y_0 \\
    \end{array} \right. ,
\end{equation}
where $\DS^{\alpha(t)}_0$ is the Scarpi variable-order fractional derivative introduced in Definition \ref{defn:ScarpiDerivative} and  $\lambda > 0$ a real parameter. 

Finding analytical solutions for the initial value problem (\ref{eq:S_Relaxation}) does not seem in general possible since the absence of an explicit representation of the kernel $\phi_{\alpha}(t)$ of $\DS^{\alpha(t)}_0$. Therefore, tackling this problem from a numerical perspective becomes unavoidable and necessary.


Since the linear nature of (\ref{eq:S_Relaxation}), a simple approach consists in exploiting the LT and its numerical inversion. Indeed,  by applying the LT to both sides of (\ref{eq:S_Relaxation}), and recalling Eq. (\ref{eq:ScarpiDerDefinitionLT}), one finds
\[
 s^{sA(s)} Y(s) - s^{sA(s)-1} y_0 = -\lambda Y(s) ,
\]
with $Y(s)$ the LT of the solution $y(t)$. Therefore, an algebraic manipulation leads to 
\begin{equation}\label{eq:LT_Relaxation}
    Y(s) = \frac{y_0}{s \bigl( 1 + \lambda \Psi_{\alpha}(s) \bigr)} 
\end{equation}
and hence it is possible to evaluate the solution $y(t) = {\mathcal L}^{-1} \bigl( Y(s) \, ; \, t \bigr)$ in the time domain by applying again one of the methods for the numerical inversion of the LT as the one described in the \ref{S:NumericalInversionLT}.

To this end we present the solutions $y(t)$ of the relaxation equation (\ref{eq:S_Relaxation}) with the other transition functions $\alpha(t)$ introduced in Section \ref{S:TransitionFunctions}. In the various plots, together with the solution $y(t)$, we also offer a comparison of $y(t)$ with the solutions $y_{1}(t)$ and $y_2(t)$ of the same relaxation equation with the standard Caputo derivative of order $\alpha_1$ and $\alpha_2$, respectively.

In the first case, see Figure \ref{fig:Fig_Sol_EXP}, the exponential transition $\alpha(t) = \alpha_2 + (\alpha_1 - \alpha_2) \eu^{-ct}$ (with $\alpha_1=0.6$, $\alpha_2=0.8$ and $c=2$) is considered. 

\begin{figure}[tph]
\centering
\includegraphics[width=0.65\textwidth]{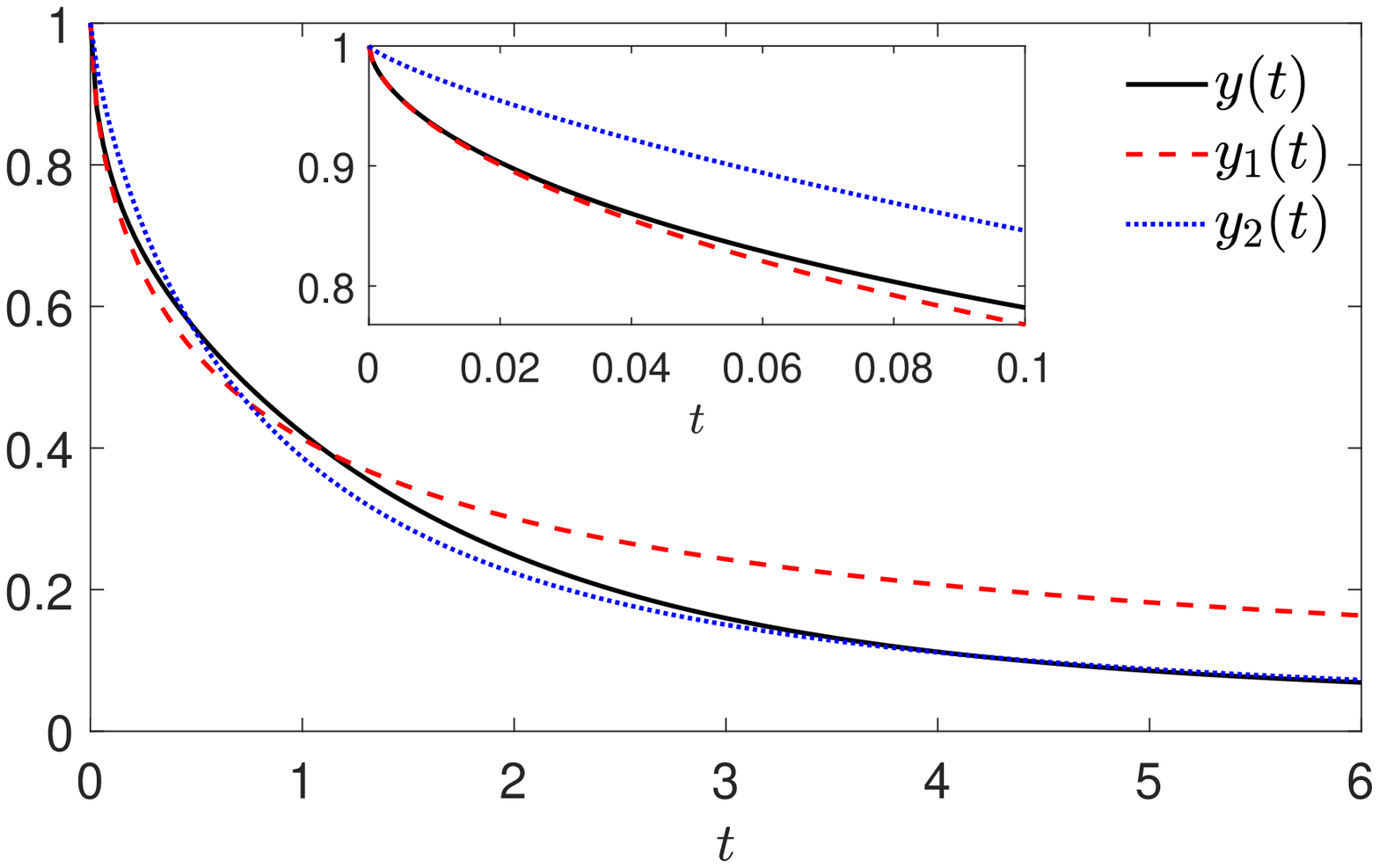} 
\caption{Plot of the solution $y(t)$ of the relaxation equation (\ref{eq:S_Relaxation}), with $\lambda=1$ and $y_0=1$, for  variable-order transition  $\alpha(t) = \alpha_2 + (\alpha_1 - \alpha_2) \eu^{-ct}$, with $\alpha_1=0.6$, $\alpha_2=0.8$ and $c=2$, and comparison with solutions $y_1(t)$ and $y_2(t)$ of the standard fractional relaxation  equations of order $\alpha_1$ and $\alpha_2$.} \label{fig:Fig_Sol_EXP}
\end{figure}

The numerical results show how well the solution with the Scarpi derivative matches the solution of the Caputo relaxation equation of order $\alpha_1$ close to the origin and of the Caputo relaxation equation of order $\alpha_2$ for large $t$. The box in each figure offers a closer look of the solutions near to the origin.

Similar results are obtained with the transition function $\alpha(t) = \alpha_2 + (\alpha_1 - \alpha_2) E_{\beta}(-c t^{\beta})$ (with $\alpha_1=0.6$, $\alpha_2=0.8$, $c=2.0$ and $\beta=0.7$) shown in Figure \ref{fig:Fig_Sol_MLF}, as well as with the transition function $\alpha(t) = \alpha_1 + (\alpha_2-\alpha_1) \erf(\sqrt{ct})$ (with $\alpha_1=0.6$, $\alpha_2=0.8$, $c=2.0$) depicted in Figure \ref{fig:Fig_Sol_ERF}.

\begin{figure}[tph]
\centering
\includegraphics[width=0.65\textwidth]{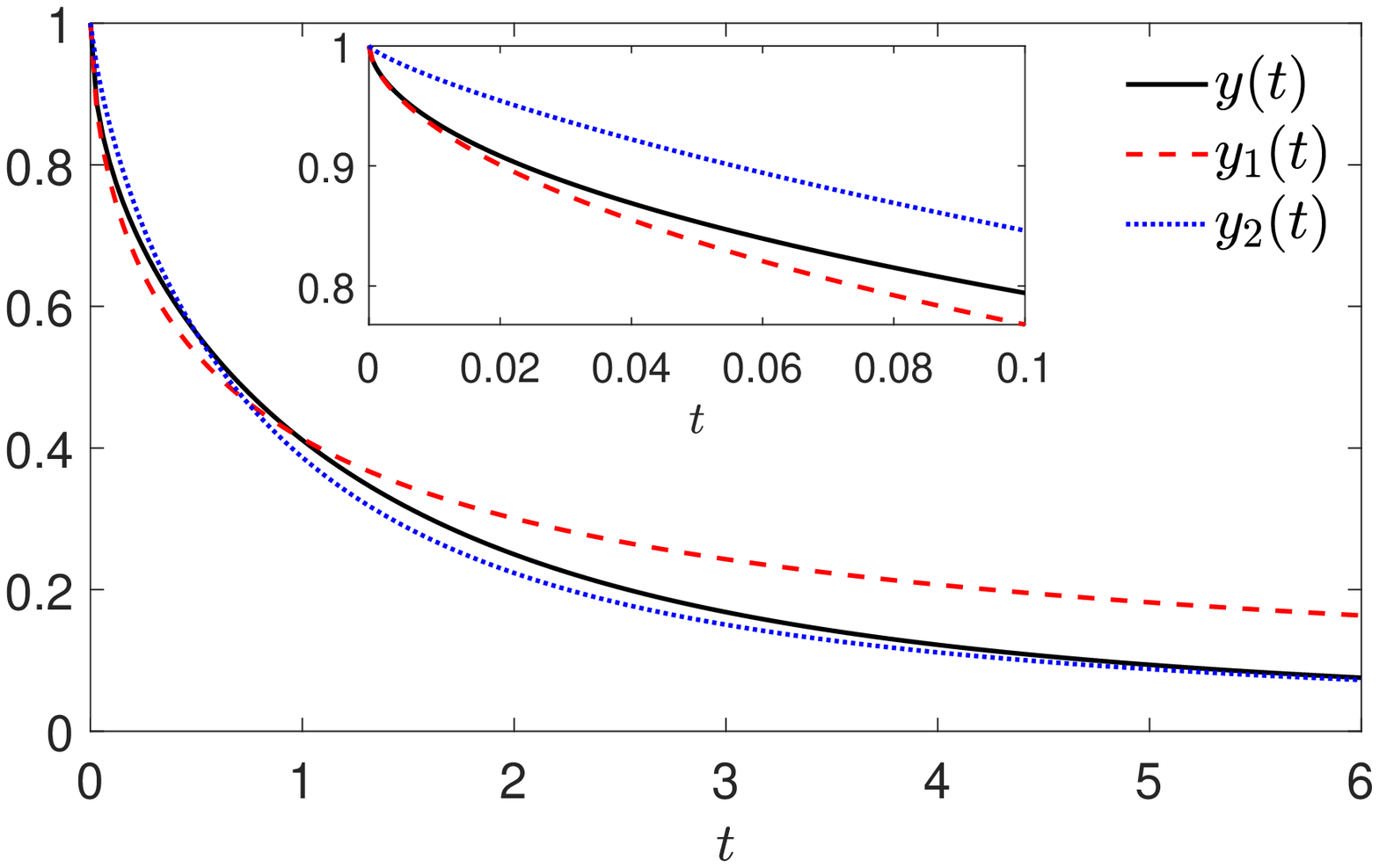} 
\caption{Plot of the solution $y(t)$ of the relaxation equation (\ref{eq:S_Relaxation}), with $\lambda=1$ and $y_0=1$, for variable-order transition $\alpha(t) = \alpha_2 + (\alpha_1 - \alpha_2) E_{\beta}(-c t^{\beta})$, with $\alpha_1=0.6$, $\alpha_2=0.8$, $c=2.0$ and $\beta=0.7$, and comparison with solutions $y_1(t)$ and $y_2(t)$ of the standard fractional relaxation  equations of order $\alpha_1$ and $\alpha_2$.} \label{fig:Fig_Sol_MLF}
\end{figure}

\begin{figure}[tph]
\centering
\includegraphics[width=0.65\textwidth]{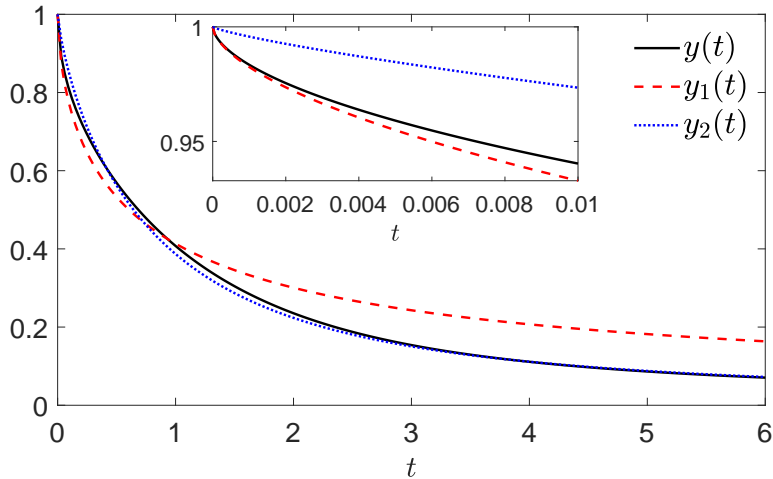}
\caption{Plot of the solution $y(t)$ of the relaxation equation (\ref{eq:S_Relaxation}), with $\lambda=1$ and $y_0=1$, for variable-order transition $\alpha(t) = \alpha_1 + (\alpha_2-\alpha_1) \erf(\sqrt{ct})$, with $\alpha_1=0.6$, $\alpha_2=0.8$, $c=2.0$, and comparison with solutions $y_1(t)$ and $y_2(t)$ of the standard fractional relaxation  equations of order $\alpha_1$ and $\alpha_2$.} \label{fig:Fig_Sol_ERF} 
\end{figure}

Alternatively, one can solve the initial value problem in \eqref{eq:S_Relaxation} by using 
the integral formulation of the problem 
\begin{equation}\label{eq:SFDE_VIE}
    y(t) = y_0 - \lambda \IS^{\alpha(t)}_0  y(t) = y_0 - \lambda \int_0^t \psi_{\alpha}(t-\tau) y(\tau)  \du \tau ,
\end{equation}
and then apply the convolution quadrature rules devised and studied by Lubich in \cite{Lubich1988a,Lubich1988b}. These rules have the great advantage of providing accurate approximations of convolution integrals like the one in (\ref{eq:SFDE_VIE}) for which the kernel $\phi(t)$ is known only through its LT $\Phi(s)$, as it is for the Scarpi integral. Hence, this scheme looks rather promising for handling general fractional differential equations, in special way of nonlinear type, involving the Scarpi derivative.

\begin{remark}
We have confined our discussion to relaxation equations (namely, when $\lambda >0$) but studying the effect of variable-order operators on growth equations ({\em i.e.}, $\lambda <0$) can be of interest, especially for applications to growth models with memory in macroeconomics  \cite{TarasovTarasova2019_Mathematics,Tarasov2020_Mathematics,TarasovTarasova2021}. An extension of the theory of GFC to growth equations is discussed in   
\cite{KochubeiKondratiev2019}. The general theory developed here clearly applies to growth equations as well. However, numerical difficulties may arise in the inversion of the LT due to singularities in (\ref{eq:LT_Relaxation}) when $\lambda <0$.
\end{remark}

\section{Higher-order operators}\label{S:HigherOrderOperators}
Up to this point the presented analysis has been confined to derivatives and integrals of order $0<\alpha(t)<1$. Defining variable-order operators with transition functions $\alpha(t)$ with values spanning a wider range requires some care. Here we shall explore some preliminary ideas in this direction.

Consider Example 1 from Section \ref{S:TransitionFunctions} with the exponential transition function 
\[
\alpha(t) = \alpha_2 + (\alpha_1 - \alpha_2) \eu^{-ct}, \quad c > 0 ,
\]
and where now, for some integer $n \ge 1$, we assume $n-1<\alpha_1<\alpha_2< n$. By following the same reasoning presented in Section \ref{S:NecessaryAssumptions}, we observe that assuming  $\Phi_{\alpha}(s)=s^{sA(s)-1}$ leads now to $\Phi_{\alpha}(s) \to \infty$ as $s\to \infty$ when $n \ge 2$ and hence $\Phi_{\alpha}(s)$ cannot be the LT of any function $\phi_{\alpha}(t)$.

Therefore, 
one has to consider an alternative form of $\Phi_{\alpha}(s)$, when  $n \ge 2$, so that $\phi_{\alpha}(t)$ and $\psi_{\alpha}(t)$ exist and form a Sonine pair. Yet again, the theory in \cite{Luchko2021_Symmetry,Luchko2021_Mathematics} can provide some guidance.

Let $\alpha(t):[0,T] \to (n-1,n)$, $n \in \Nset$, and consider the integral $\IS^{\alpha(t)}_0$ introduced in (\ref{eq:ScarpiIntegralConvolution}). In order to find a derivative $\DS^{\alpha(t)}_0$ acting as the left-inverse of $\IS^{\alpha(t)}_0$ one has to build a kernel $\phi_{\alpha,n}(t)$ satisfying a generalized Sonine equation \cite[Eq. (35)]{Luchko2021_Symmetry}
\[
    \int_0^t \phi_{\alpha,n}(t-\tau) \psi_{\alpha}(\tau) \du \tau = \frac{t^{n-1}}{(n-1)!}
    , \quad t > 0 ,
\]
that in the Laplace domain simply reads
$$\Phi_{\alpha,n}(s) s^{-sA(s)} = s^{-n} \, .$$
Then, the derivative kernel is simply obtained as
\[
    \phi_{\alpha,n}(t) \coloneqq {\mathcal L}^{-1}\Bigl( \Phi_{\alpha,n}(s) \, ; \, s\Bigr)
    , \quad 
    \Phi_{\alpha,n}(s)= s^{sA(s)-n} \, , 
\]
where one can clearly see that the necessary condition $\Phi_{\alpha,n}(s) \to 0$, as $s \to \infty$, is fulfilled.

\begin{remark}
Note that setting $n=1$ 
the entire discussion transposes into the analysis presented in the previous section for $0<\alpha(t)<1$.
\end{remark}

Therefore a more general variable-order derivative for $n-1<\alpha(t)<n$ is obtained as (see \cite[Definition 3.2]{Luchko2021_Symmetry}) 
\[
    \DS^{\alpha(t)}_0 f(t) \coloneqq \frac{\du^n}{\du t^n} \int_0^t \phi_{\alpha,n}(t-\tau) f(\tau) \du \tau - \sum_{j=0}^{n-1} f^{(j)}(0) \phi_{\alpha,j}(t) 
, \quad t \in [0,T] ,
\]
where, for $j=0,1,\dots,n-1$, one has that
\[
\phi_{\alpha,j}(t) = \frac{\du^n}{\du t^n} \int_0^t \phi_{\alpha,n}(t-\tau) \frac{\tau^j}{j!}\du \tau 
= \frac{\du^{n-j-1}}{\du t^{n-j-1}} \phi_{\alpha,n}(t).
\]
However, a more practical way of computing the functions $\phi_{\alpha,j}(t)$ 
relies on noting that $
    \phi_{\alpha,j}(t) \coloneqq {\mathcal L}^{-1} \bigl( s^{n-j-1}\Phi_{\alpha,n}(s) \, ; \, s \bigr)$.

It is also possible to provide a different characterization of $\DS^{\alpha(t)}_0 f(t)$ if $f(t)$ is sufficiently regular. Indeed, by iterating the procedure in Proposition \ref{prop:ScarpiDerivative}, one can conclude that if $f (t)$ is differentiable $n-1$ times in $[0,T]$ with $f^{(n-1)} \in AC[0,T]$, then
\[
    \DS^{\alpha(t)}_0 f(t) = \int_0^t \phi_{\alpha,n}(t-\tau) f^{(n)}(\tau) \du \tau , \quad t \in [0,T]  \, ,
\]
see \cite[Theorem 5]{Luchko2021_Mathematics} for details.

\section{Concluding remarks}\label{S:FinalConsiderations}

This paper aims at making the first step toward reviving Scarpi's ideas on variable-order fractional calculus. We have framed these ideas in terms of the recently developed theory of generalized fractional calculus \cite{Luchko2021_Mathematics,Luchko2021_Symmetry,Kochubei2011,Luchko2020_FCAA} and we have shown one of the possible numerical approaches needed for handling these derivatives and related initial value problems. 

There are still many open problems that need to be addressed. For instance, despite the analysis presented here, an exact characterization of the proprieties that the transition function $\alpha(t)$ must satisfy  in order to generate a pair of suitable Scarpi's operators $\{\IS^{\alpha}_0,\DS^{\alpha}_0\}$ remains an issue requiring some attention. Further, the discussion presented in this work was limited to transition functions with values in either $(0,1)$ or $(n-1,n)$, however, considering transitions in $(0,n)$ could be of interest for some physical applications. Additionally, a precise investigation of the general structure of the eigenfunctions of the relaxation equation (\ref{eq:S_Relaxation}), and of their asymptotic properties, would prove invaluable for further physical applications. Lastly in our (incomplete) collection of open questions in Scarpi's theory, further efforts should be devoted to designing efficient numerical methods to solve more general fractional differential equations involving the these operators. 

To conclude, Scarpi's theory offers a brand new way of looking at variable-order processes in fractional calculus with a limitless potential  for applications in physics, engineering, and
other natural sciences.

\appendix
\section{A numerical method for the inversion of the LT}\label{S:NumericalInversionLT}

In this Appendix we provide a detailed  description of the method employed in the previous Sections to numerically invert the LT of the kernels $\phi_{\alpha}(t)$ and $\psi_{\alpha}(t)$  and of the solution of the relaxation equation (\ref{eq:S_Relaxation}).

The method is based on the main idea by Talbot \cite{Talbot1979} consisting in deforming,  in the formula for the inversion of the LT $F(s)$ of a function $f(t)$
\begin{equation}\label{eq:InverseLTBromwich}
f(t) = \frac{1}{2 \pi \iu}\int_{\sigma-\iu\infty}^{\sigma+\iu\infty} \eu^{s t} F(s)  \du s ,
\end{equation}
the  Bromwich line $(\sigma-\iu\infty,\sigma+\iu\infty)$ into a different contour ${\mathcal C}$ beginning and ending in the left complex half-plane. In this way it is possible to obtain an accurate approximation of the function $f(t) $ after applying a suitably chosen  quadrature rule along ${\mathcal C}$, since the strong oscillations of the exponential, and the resulting numerical instability, are avoided. 

This approach  was successively refined  by Weidemann and Trefethen \cite{WeidemanTrefethen2007} who  provided a detailed error analysis allowing to properly select the geometry of the contour ${\mathcal C}$ and the quadrature parameters in order to achieve any prescribed accuracy $\varepsilon>0$ (a tailored analysis for the ML function was successively proposed in \cite{GarrappaPopolizio2013} and applied in the context of ML with matrix arguments \cite{GarrappaPopolizio2018} as well). A further improvement was introduced in \cite{Garrappa2015_SIAM} with the aim o better handling LTs $F(s)$ with one or more singularities scattered in the complex plane. However. since in our examples we are faced with LTs $F(s)$ having just singularities at the origin or on the branch-cut, the original algorithm introduced in \cite{WeidemanTrefethen2007} turns out to be good enough. 

One of the most useful contours used to replace the Bromwich line in (\ref{eq:InverseLTBromwich}) is a parabolic-shaped contour described by the equation
\[
z(u) = \mu (\iu u+1)^2,\,\, -\infty < u < \infty,
\]
where $\mu>0$ is a parameter determining the abscissa where the parabola crosses the real axis and the concavity of the parabola. Although more efficient contours are available (with these regards we refer to the analysis in \cite{TrefethenWeidemanSchmelzer2006}), parabolas present the major advantage of  a very simple representation, depending on just one parameter, which simplifies the error analysis.

After deforming the Bromwich line into the parabolic contour $z(u)$, suitably chosen to encompass any possible singularity of $F(s)$, one obtains the equivalent formulation 
\begin{equation}\label{eq:InverseLTParabolic2}
f(t) = \frac{1}{2 \pi \iu}\int_{-\infty}^{+\infty} \eu^{z(u) t} F(z(u)) z'(u) \du u.
\end{equation}

Hence, the application of a trapezoidal rule with step-size $h$ on a sufficiently large truncated interval $[-hN,hN]$ leads to the approximation
\begin{equation}\label{eq:InverseLTParabolic}
f_{h,N}(t) = \frac{h}{2 \pi \iu}\sum_{k=-N}^{N} \eu^{z(u_k) t} F(z(u_k)) z'(u_k) 
, \quad u_k = hk .
\end{equation}

The choice of the three parameters $\mu$, $h$ and $N$ is essential to achieve an accurate approximation of $f(t)$ and it is driven by a detailed analysis of the error $|f(t) - f_{h,N}(t)|$. This in turn consists of  two main components: the discretization error (DE) and the truncation error (TE). By following the analysis in \cite{WeidemanTrefethen2007}, in absence of singularities of $F(s)$ (except for the branch-point singularity at the origin and the branch-cut placed, for convenience, on the negative real semi-axis) one can find that 
\[
\begin{aligned}
    |DE| &= {\mathcal O}\Bigl( \eu^{-2\pi/h} \Bigr) + 
    {\mathcal O}\Bigl( \eu^{- \pi^2/(\mu t h^2) + 2\pi/h} \Bigr) , \quad h \to 0, \\
    |TE| &= {\mathcal O}\Bigl( \eu^{\mu t \bigl(1-(hN)^2\bigr)}\Bigr) ,
    \quad h \to 0 \
\end{aligned}
\]

A more accurate analysis  takes into account the round-off error (RE) as well, for which (after exploiting $|z'(u)| = 2 \sqrt{\mu} \sqrt{|z(u))}|$) the following estimates hold \cite{Weideman2010}
\[
\begin{aligned}
    |\text{RE}| &\le \frac{\epsilon h}{\pi} \eu^{\mu t} \sum_{k=0}^{N} |F(z(u_k))| |z'(u_k)|
    = \frac{2 \epsilon \sqrt{\mu} h}{\pi} \eu^{\mu t} \sum_{k=0}^{N} |\hat{F}(z(u_k))| \\
    &\approx \epsilon \eu^{\mu t}
    \frac{2 \sqrt{\mu}}{\pi} \int_0^{Nh} |\hat{F}(s)| \du s , \\
\end{aligned}
\]
where $\epsilon$ is the precision machine and $\hat{F}(s) = F(s) s^{\frac{1}{2}}$. Obviously, the analysis needs to be customized according to the specific LT $F(s)$ which must be inverted. If $\hat{F}(s)$ is assumed to have a moderate growth and $Nh$ is in general not large (in practice very often it is $h={\mathcal O}\bigl(N^{-1}\bigr)$ one can neglect the integral in the estimate of $RE$ (as well as the $2\sqrt{\mu}/\pi$ term) and just assume
$
    |RE| \approx \epsilon \eu^{\mu t} 
$.

Optimal parameters $\mu$, $h$ and $N$ can be now  obtained after balancing the three different errors and imposing that they are proportional to a given prescribed accuracy which, to simplify the analysis and at the same time ensure accurate results, we select at the same level of the precision machine $\epsilon\approx 2.22\times 10^{-16}$. Therefore,  after imposing that $|DE| \approx |TE| \approx |RE| \approx \varepsilon$ asymptotically as $h \to 0$, and denoting $L=-\log\epsilon$, the balancing of the three errors leads to
\[
N = \frac{4L}{3\pi} 
, \quad
\mu = \frac{L^3}{4 t \pi^2 N^2}
, \quad
h = \frac{2\pi}{L} + \frac{L}{2 \pi N^2} .
\]

\begin{remark}
In the above analysis we have assumed a moderate growth of $\hat{F}(s)$ as $s\to \infty$. With respect to the transition $\alpha(t)$ considered in our examples this assumption is truly reasonable in order to compute $\psi_{\alpha}(t)$ or the solution $y(t)$ of the relaxation equation (\ref{eq:S_Relaxation}) but could be too much optimistic for the evaluation of $\phi_{\alpha}(t)$ which is expected to have a more sustained growth. Although we have obtained reasonable results the same, we think that a more detailed analysis is necessary if one aims to compute $\phi_{\alpha}(t)$ with high accuracy.  
\end{remark}

In the following we report the few lines of a Matlab code for the numerical inversion of  the LT $F(s)$ on a vector of points $t$. The code is optimized to evaluate just functions $f(t)$ with real values. The LT $F(s)$ is assumed not to have singularities except a possible one at the origin.

\begin{lstlisting}[style=Matlab-editor,float,floatplacement=H]
L = -log(eps) ; 
N = ceil(4*L/3/pi) ; 
h = 2*pi/L + L/2/pi/N^2 ; 
p = L^3/4/pi^2/N^2 ;
u = (0:N)*h ;
f = zeros(size(t)) ;
for n = 1 : length(t)
    mu = p/t(n) ;
    z = mu*(u*1i+1).^2 ; z1 = 2*mu*(1i-u) ;
    G = exp(z*t(n)).*F(z).*z1 ;
    f(n) = (imag(G(1))/2+sum(imag(G(2:N+1))))*h/pi ;
end
\end{lstlisting}

\section*{Acknowledgments}

The work of R.Garrappa is  supported by INdAM under a GNCS-Project 2020. The work of A.Giusti is supported by the Natural Sciences and Engineering Research Council of
Canada (Grant No. 2016-03803 to V. Faraoni) and by Bishop’s University. The work of A.Giusti and F.Mainardi has been carried out in the framework of the activities of the Italian National Group for Mathematical Physics [Gruppo Nazionale per la Fisica Matematica (GNFM), Istituto Nazionale di Alta Matematica (INdAM)].



\end{document}